\documentclass[12pt,twoside]{amsart}
\usepackage{latexsym,amssymb,amsmath}

\usepackage{tikz-cd}
\usepackage{mathtools}
\usepackage[all]{xy}
\usepackage{hyperref}
\hypersetup{colorlinks=true}

\textwidth=16.00cm
\textheight=22.00cm
\topmargin=0.00cm
\oddsidemargin=0.00cm 
\evensidemargin=0.00cm
\headheight=0cm
\headsep=1cm
\headsep=0.5cm 
\numberwithin{equation}{section}
\hyphenation{semi-stable}
\setlength{\parskip}{3pt}

\newtheorem{theorem}{Theorem}[section]
\newtheorem{lemma}[theorem]{Lemma}
\newtheorem{proposition}[theorem]{Proposition}

\newtheorem{theoremx}{\bf Theorem}
\newtheorem{conjecture}[theoremx]{Conjecture}

\theoremstyle{definition}
\newtheorem{definition}[theorem]{Definition} 
 
\newtheorem{remark}[theorem]{Remark}

\newtheorem{notation}[theorem]{Notation}


\newcommand{\NN}{\mathbb{N}}

\newcommand{\ZZ}{\mathbb{Z}}
\newcommand{\QQ}{\mathbb{Q}}
\newcommand{\FF}{\mathbb{F}}

\newcommand{\KK}{\mathbb{K}}

\DeclareMathOperator{\Spec}{{Spec}}

\DeclareMathOperator{\Ass}{{Ass}}

\DeclareMathOperator{\fpt}{{fpt}}

\DeclareMathOperator{\reg}{{reg}}

\DeclareMathOperator{\aaa}{{a}}
\DeclareMathOperator{\bb}{{b}}
\DeclareMathOperator{\cc}{{c}}

\DeclareMathOperator{\e}{{e}}

\newcommand{\m}{\mathfrak{m}}
\newcommand{\cF}{\mathcal{F}}
\newcommand{\p}{\mathfrak{p}}
\newcommand{\q}{\mathfrak{q}}
\newcommand{\cA}{\mathcal{A}}
\newcommand{\cV}{\mathcal{V}}
\newcommand{\cD}{\mathcal{D}}

\begin{document}


\title[Minimum
distance]{Bounds for the minimum distance function}

\author[Luis N\'u\~nez-Betancourt]{Luis N\'u\~nez-Betancourt$^{\aaa}$}
\address{Luis N\'u\~nez-Betancourt \\ Centro de Investigaci\'on en Matem\'aticas\\ Guanajuato, Gto., M\'exico.}
\thanks{$^{\aaa}$ The first author was partially supported by CONACyT Grant 284598,  Cátedras Marcos Moshinsky, and  SNI, Mexico.}
\email{luisnub@cimat.mx}

\author[Yuriko Pitones]{Yuriko Pitones$^{\bb}$}
\address{Yuriko Pitones\\ 
Centro de Investigaci\'on en Matem\'aticas\\ Guanajuato, Gto., M\'exico.}
\thanks{$^{\bb}$ The second author was partially supported by CONACyT Grant 427234 and  CONACyT Postdoctoral Fellowship 177609.}
\email{yuriko.pitones@cimat.mx}

\author[Rafael H. Villarreal]{Rafael H. Villarreal$^{\cc}$}
\address{Rafael H. Villarreal\\
Departamento de
Matem\'aticas\\
Centro de Investigaci\'on y de Estudios
Avanzados del
IPN\\
Apartado Postal
14--740 \\
07000 Mexico City, D.F.
}
\thanks{$^{\cc}$ The third author was partially supported by SNI, Mexico.}
\email{vila@math.cinvestav.mx}

\keywords{Minimum distance, Castelnuovo--Mumford regularity, monomial ideal.}
\subjclass[2010]{Primary 13D40; Secondary 13H10, 13P25.}  

\maketitle 

\parindent=8mm

\begin{abstract}
\noindent
Let $I$ be a homogeneous ideal in a polynomial ring $S$.
In this paper, we extend the study of the asymptotic behavior of the minimum distance function $\delta_I$ of $I$ and give bounds for its stabilization point, $r_I$, when $I$ is  an $F$-pure or a square-free monomial ideal.
These bounds are related with the dimension and the Castelnuovo--Mumford regularity of $I$.
\end{abstract}

\noindent
{\small\textbf{Keywords:} Minimum distance, Castelnuovo--Mumford regularity, monomial ideal.}

\tableofcontents

\section{Introduction}\label{Intro}
In this manuscript we study the minimum distance function $\delta_I$ of a homogeneous ideal $I$ contained in a polynomial ring $S=\KK[x_1,\ldots,x_n]$ over a field $\KK$. 
This minimum distance function for ideals  was introduced by the second-named and third-named authors together with Mart\'inez-Bernal \cite{MBPV} to obtain an algebraic formulation of the minimum distance of projective Reed--Muller-type codes over finite fields.

If $I$ is an unmixed radical graded ideal and its associate primes are generated by linear forms, then $\delta_I$ is non-increasing  \cite{MBPV}.
 In our first result, we extend this property to any radical  ideal.
\begin{theoremx}[{Theorem \ref{ThmNonDec}}]\label{MainThmNonDec}
Suppose that $I\subseteq S$ is a radical ideal.
Then, $\delta_{I}(d)$ is a non-increasing function.
\end{theoremx}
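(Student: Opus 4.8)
The plan is to prove $\delta_I(d+1)\le\delta_I(d)$ for every $d$. Recall that, by definition, $\delta_I(d)=\deg(S/I)$ when $\mathcal{F}_d:=\{f\in S_d:\ f\notin I,\ (I:f)\ne I\}$ is empty, and $\delta_I(d)=\min\{\deg(S/(I:f)):\ f\in\mathcal{F}_d\}$ otherwise; so it suffices, given $f\in\mathcal{F}_d$ with $\delta_I(d)=\deg(S/(I:f))$, to exhibit $g\in\mathcal{F}_{d+1}$ with $\deg(S/(I:g))\le\deg(S/(I:f))$. What makes this feasible for a radical ideal is the explicit form of its colon ideals: if $I=\p_1\cap\cdots\cap\p_s$ is its decomposition into minimal primes and $h\notin I$ satisfies $(I:h)\ne I$, then $(I:h)=\bigcap\{\p_i:\ h\notin\p_i\}$; this ideal is again radical, its minimal primes form a proper nonempty subset of $\{\p_1,\dots,\p_s\}$, and it contains $I$, so $\deg(S/(I:h))\le\deg(S/I)$. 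Moreover, if $(I:h)\subseteq(I:h')$ for two such forms, then $\Min(S/(I:h'))\subseteq\Min(S/(I:h))$.

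For the main step, fix $f\in\mathcal{F}_d$ attaining the minimum. Since $f\notin I$, some minimal prime of $I$ does not contain $f$; among those pick one, $\p_{i_0}$, of maximal dimension. If $\p_{i_0}=\m$ then $I=\m$, $S/I=\KK$ has no zero-divisors and $\delta_I$ is constant, so assume $\p_{i_0}\ne\m$ and choose a variable $x_j\notin\p_{i_0}$. Set $g:=x_jf\in S_{d+1}$. Then $g\notin\p_{i_0}$, hence $g\notin I$; and $(I:g)=((I:f):x_j)=\bigcap\{\p_i:\ f\notin\p_i,\ x_j\notin\p_i\}\supseteq(I:f)\supsetneq I$, so $(I:g)\ne I$ and $g\in\mathcal{F}_{d+1}$ (in particular $\mathcal{F}_{d+1}\ne\emptyset$ once $\mathcal{F}_d\ne\emptyset$). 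As $\p_{i_0}$ still occurs among the primes intersected to form $(I:g)$, we get $\dim S/(I:g)\ge\dim S/\p_{i_0}=\dim S/(I:f)$, while $(I:g)\supseteq(I:f)$ forces the opposite inequality; hence the two rings have the same dimension, and since $\Min(S/(I:g))\subseteq\Min(S/(I:f))$ this yields $\deg(S/(I:g))\le\deg(S/(I:f))$. Therefore $\delta_I(d+1)\le\deg(S/(I:g))\le\deg(S/(I:f))=\delta_I(d)$.

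The degenerate cases are quickly disposed of: if $S/I$ is a domain, or $I=\m$, there are no homogeneous zero-divisors outside $I$, so $\mathcal{F}_d=\emptyset$ for all $d$ and $\delta_I$ is the constant $\deg(S/I)$. If $\mathcal{F}_d=\emptyset$ for a single $d$, then $\delta_I(d)=\deg(S/I)$ and it remains only to check $\delta_I(d+1)\le\deg(S/I)$, which holds with equality when $\mathcal{F}_{d+1}=\emptyset$ and otherwise follows from $\deg(S/(I:g))\le\deg(S/I)$ for every $g\in\mathcal{F}_{d+1}$.

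I expect the only subtle point to be the degree comparison $\deg(S/(I:g))\le\deg(S/(I:f))$: enlarging the ideal shrinks the set of minimal primes, but were it also to lower the dimension the two degrees would not be directly comparable. Arranging that no drop in dimension occurs is precisely the reason for taking $g=x_jf$ with $x_j$ outside a \emph{maximal-dimensional} minimal prime missing $f$, rather than an arbitrary variable. This is also where radicality is essential: for a non-radical $I$, $(I:f)$ need not be radical, its associated primes need not sit among those of $I$, and passing to a larger ideal can raise a multiplicity — pathologies that the unmixedness and linearity hypotheses in the previously known case were exactly designed to exclude.
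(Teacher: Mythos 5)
Your argument starts from a restatement of the definition of $\delta_I$ that is not the one the paper uses, and the two do not agree in the generality of this theorem. The paper defines $\delta_I(t)=\e(S/I)-\max\{\e(S/(I,f))\mid f\in\cF_t\}$, with the ideal sum $(I,f)$, whereas you take $\delta_I(t)=\min\{\e(S/(I\colon f))\mid f\in\cF_t\}$. For \emph{unmixed} radical ideals these coincide: there $(I\colon f)=\bigcap\{\p_i\mid f\notin\p_i\}$ and $(I,f)$ pick out complementary subsets of $\cA(I)$, and additivity gives $\e(S/(I\colon f))+\e(S/(I,f))=\e(S/I)$. But the point of Theorem \ref{ThmNonDec} is precisely to drop unmixedness, and for mixed radical ideals the two functions differ. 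For example, with $I=(xy,xz)=(x)\cap(y,z)$ in $\KK[x,y,z]$ and $f=x\in\cF_1$, one has $\e(S/I)-\e(S/(I,x))=1-1=0$, while $\e(S/(I\colon x))=\e(S/(y,z))=1$; more globally, Proposition \ref{PropStableValue} shows $\delta_I(t)=0$ eventually in the mixed case, a value your function (always $\geq 1$) can never take. A related slip appears in your degenerate-case discussion: $(I\colon h)\supseteq I$ does \emph{not} force $\e(S/(I\colon h))\leq\e(S/I)$ when $\dim S/(I\colon h)<\dim S/I$, since the two multiplicities are then normalized with respect to different dimensions. So what you have proved is monotonicity of a different function, which happens to equal $\delta_I$ only under the unmixedness hypothesis the theorem is meant to remove.

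That said, the combinatorial engine of your argument is the right one and is essentially the paper's: for radical $I$ the relevant quantity is a sum of $\e(S/\p)$ over a set of minimal primes determined by $f$, and replacing $f$ by $x_jf$ with $x_jf\notin I$ only shrinks that set. The paper phrases this as $\delta_I(t)=\sum_{\p\in\cA(I)\cap\cD(f)}\e(S/\p)$ together with $\cD(x_if)\subseteq\cD(f)$ (via Lemma \ref{LemmaDegAddf}). Your careful choice of $x_j$ outside a maximal-dimensional minimal prime missing $f$ is needed only to control a dimension drop that your colon-ideal reformulation introduces; in the paper's setup the sum always runs over subsets of the fixed top-dimensional set $\cA(I)$, so no such issue arises. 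To repair your proof, rerun the shrinking argument with $(I,f)$ and $\cA(I)\cap\cV(f)$ in place of $(I\colon f)$ and its minimal primes.
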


The previous result allow us to define the regularity index of $I$, $r_I$, as the value where $\delta_I$ stabilizes.
If $\dim(S/I)=1$, previous work shows that $r_I\leq  \reg(S/I)$ \cite{GSRTR,RMSV}, where $\reg(S/I)$ is the Castelnuovo--Mumford regularity of $S/I$.
This motivated the authors to  conjecture that this  relation holds in greater generality.

\begin{conjecture}[\cite{NBPV}]\label{Conj}
Let $I\subseteq S$ be a radical homogeneous ideal whose associated primes are generated by linear forms. Then, $r_I\leq \reg(S/I).$
\end{conjecture}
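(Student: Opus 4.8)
The plan would be to use Theorem~\ref{MainThmNonDec} to convert the conjecture into a comparison between $\reg(S/I)$ and certain initial degrees attached to the minimal primes of $I$, and then to attack that comparison. Since $r_I$ is the point at which the (by Theorem~\ref{MainThmNonDec}, non-increasing) function $\delta_I$ stabilizes, the conjecture is equivalent to the assertion that $\delta_I(d)=\delta_I(\reg(S/I))$ for every $d\ge\reg(S/I)$. Writing $\delta_I(d)=\deg(S/I)-\Delta_I(d)$ with $\Delta_I(d)=\max\{\deg(S/(I,f))\colon f\in S_d\setminus I,\ (I:f)\ne I\}$, this is the same as saying that the non-decreasing function $\Delta_I$ is already constant on $[\reg(S/I),\infty)$.

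First I would exploit the hypotheses combinatorially. Since $I$ is radical, $\Ass(S/I)=\Min(S/I)=\{\p_1,\dots,\p_m\}$ and the zero-divisors on $S/I$ are precisely $\bigcup_i\p_i$; as each $\p_i$ is generated by linear forms, $S/\p_i$ is a polynomial ring and $\deg(S/\p_i)=1$. For an admissible $f$ (that is, $f\in S_d\setminus I$ with $(I:f)\ne I$) put $A_f=\{\,i\colon f\in\p_i\,\}$, a set with $\emptyset\ne A_f\subsetneq\{1,\dots,m\}$; then $\deg(S/(I,f))$ depends on $f$ only through $A_f$ together with the degrees of $f$ modulo the $\p_i$, and in the unmixed case it is simply $|A_f|$. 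Hence $\Delta_I(d)$ is governed by which \emph{configurations} $A\subsetneq\{1,\dots,m\}$ are \emph{realized in degree $d$}, i.e.\ by whether $\bigl(\bigcap_{i\in A}\p_i\bigr)_d$ contains a form lying in none of the $\p_j$ with $j\notin A$. Setting $Q_T=\bigcap_{i\in T}\p_i$ and using prime avoidance for vector spaces (extending the base field if it is finite, which changes neither $r_I$ nor $\reg(S/I)$), the configuration $A$ is realized in degree $d$ if and only if $d\ge\max_{j\notin A}\indeg\bigl(Q_A/Q_{A\cup\{j\}}\bigr)$, because $Q_A/Q_{A\cup\{j\}}$ embeds as a nonzero ideal of the domain $S/\p_j$ and such an ideal is nonzero in every degree at least its initial degree.

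Consequently $\Delta_I$ stabilizes, and its stable value is reached in degree $\max\{\,\indeg(Q_A/Q_{A\cup\{j\}})\,\}$, the maximum taken over the optimal configurations $A$ and the indices $j\notin A$. Since $\indeg(M)\le\reg(M)$ for every nonzero finitely generated graded module $M$, and each $Q_A/Q_{A\cup\{j\}}$ is a subquotient of $S/I$, it would suffice to bound $\reg(Q_A/Q_{A\cup\{j\}})$ by $\reg(S/I)$; by the exact sequences $0\to Q_{A\cup\{j\}}\to Q_A\to Q_A/Q_{A\cup\{j\}}\to 0$ together with $\reg(Q_T)=\reg(S/Q_T)+1$, this reduces to controlling how $\reg(S/Q_T)$ changes as the index set $T$ shrinks. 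In the unmixed case the optimal configurations are those with $|A|=m-1$, and then $Q_{A\cup\{j\}}=I$, so the required statement is exactly the inequality between the \emph{$v$-number} $v(I)=\min_j\indeg\bigl(Q_{\{1,\dots,m\}\setminus\{j\}}/I\bigr)$ and the regularity, namely $v(I)\le\reg(S/I)$; for $\dim(S/I)=1$ this is \cite{GSRTR,RMSV}, and the Hilbert-function mechanism there (that $H_{S/Q}(d)<H_{S/I}(d)$ already holds at $d=\reg(S/I)$ for the relevant sub-intersections $Q$) is the template one would hope to push to higher dimension.

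The step I expect to be the main obstacle is precisely this comparison. Regularity is \emph{not} monotone under deleting minimal primes — for a sufficiently general set of points in $\mathbb{P}^{n-1}$ no single deletion lowers $\reg(S/I)$, and deleting a facet from a Stanley--Reisner complex can disconnect it and thereby \emph{raise} the regularity — so the crude exact-sequence estimate above only yields $r_I\le\reg(S/I)+1$, and it is exactly this one-step slack that the conjecture claims can be removed. For the classes the paper can handle, the extra leverage is of two kinds. When $S/I$ is $F$-pure, $F$-injectivity forces $H^i_{\m}(S/I)_j=0$ for all $j>0$, which bounds the $a$-invariants of $S/I$ (hence $\reg(S/I)$) and, through a Frobenius-iteration argument, constrains the degrees in which the modules $Q_A/Q_{A\cup\{j\}}$ can first become nonzero. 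When $I=I_\Delta$ is a square-free monomial (Stanley--Reisner) ideal, one identifies $v(I_\Delta)$ with a minimal ``cover''-type invariant of the complex $\Delta$ and bounds it by $\reg(S/I_\Delta)$ using Hochster's formula together with Terai's duality $\reg(I_\Delta)=\projdim\bigl(\KK[\Delta^{\vee}]\bigr)$. In every version the heart of the matter is the inequality $v(I)\le\reg(S/I)$, which in full generality is still open — which is why the statement is phrased as a conjecture.
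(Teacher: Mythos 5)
You have not produced (and could not produce) a proof here, and it is worth being precise about why. The statement you are addressing is stated in the paper as a \emph{conjecture}, not a theorem, so there is no proof in the paper to compare against; more importantly, the paper explicitly records that the conjecture is \emph{false} in general: Jaramillo and the third-named author exhibit a monomial edge ideal $I$ with $r_I>\reg(S/I)$ \cite{JV}. Your closing remark that the key inequality ``is still open'' is therefore wrong --- it is refuted, and by exactly the kind of example (a square-free monomial ideal, all of whose associated primes are generated by variables) that your combinatorial reduction is supposed to handle. Any strategy that purports to prove the general statement must break somewhere, and you have in fact located the break yourself: the crude exact-sequence estimate only gives $r_I\le\reg(S/I)+1$, regularity is not monotone under deleting minimal primes, and the ``one-step slack'' genuinely cannot be removed without further hypotheses.

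That said, your structural analysis is sound and closely parallels what the paper actually does in the provable cases. The reduction of $r_I$ to initial degrees of the modules $J_i/I$ (your $Q_A/Q_{A\cup\{j\}}$ for the extremal configurations) is precisely Propositions \ref{PropRegNotEqui} and \ref{PropRegEqui}; the bound $\indeg(J_i/I)\le\reg(J_i/I)$ followed by control of $\reg(J_i/I)$ via the long exact sequence in local cohomology for $0\to J_i/I\to S/J_i\to S/I\to 0$ is exactly the mechanism of Theorems \ref{ThmShellable}, \ref{ThmFpureDim} and \ref{ThmGorFpure}. Where you and the paper diverge is in what supplies the missing control on $a_j(J_i/I)$: the paper uses shellability (so that the partial intersections along a shelling order stay Cohen--Macaulay of the same dimension) or $F$-purity of both $S/I$ and $S/J_i$ (via compatibility of the $J_i$ and the bound $a_j\le-\fpt$ from Theorem \ref{ThmDSNB}), rather than the Terai-duality route you sketch for Stanley--Reisner ideals. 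If you recast your write-up as a proof of the special cases (Theorems \ref{ThmShellable}, \ref{ThmSRGor}, \ref{ThmFpureDim}, \ref{ThmGorFpure}) and drop the claim that the general inequality might still hold, the skeleton you have is essentially correct.
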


This conjecture was previously showed for edge ideals associated to Cohen--Macaulay bipartite graphs \cite{NBPV} and if $\dim(S/I)=1$ \cite{GSRTR,RMSV}. However, the conjecture does not hold in general. Jaramillo and the third-named author  provided an example of a monomial edge ideal $I$ such that  $r_I> \reg(S/I)$ \cite{JV}.
In this work, we find bounds for $r_I$ for square-free monomial ideals.

\begin{theoremx}[{Theorem \ref{ThmShellable} \& \ref{ThmSRGor} }]\label{MainReg}
Let $I\subseteq S$ be a square-free monomial ideal.
Then, $r_I\leq \dim(S/I) $. Moreover, if $I$ is shellable or Gorenstein, then $r_I\leq \reg(S/I)$.
\end{theoremx}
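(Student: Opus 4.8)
The plan is to exhibit, for each of the three inequalities, a single monomial whose colon ideal is a linear prime of dimension $\dim(S/I)$, and then invoke Theorem~\ref{ThmNonDec} to propagate the resulting value of $\delta_I$ to all larger degrees. Write $I = I_\Delta$ for the associated Stanley--Reisner complex $\Delta$, identify faces of $\Delta$ with subsets of $\{1,\dots,n\}$, and set $k = \dim(S/I)$, so that the top-dimensional facets of $\Delta$ are those of dimension $k-1$; we may assume $\Delta$ is not a simplex, since otherwise $I$ is prime and $\delta_I$ is constant. The basic tool is the identity $(I_\Delta : x_F) = I_{\st_\Delta(F)}$ for a face $F \in \Delta$, together with the fact that the facets of $\st_\Delta(F)$ are precisely the facets of $\Delta$ containing $F$. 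Hence, whenever a face $F$ lies in a \emph{unique} facet $\widetilde F$ of $\Delta$, one has $(I : x_F) = \p_{\widetilde F} := (x_i : i \notin \widetilde F)$, a prime of dimension $|\widetilde F|$ and degree $1$; and if in addition $\widetilde F$ is top-dimensional while $\Delta$ has at least two top-dimensional facets, then $x_F \in \mathcal{F}_{|F|}$, so the definition of $\delta_I$ gives $\delta_I(|F|) \le \deg(S/(I : x_F)) = 1$. Since $\delta_I(d) \ge 1$ always, this forces $\delta_I(|F|) = 1$, and then Theorem~\ref{ThmNonDec} gives $\delta_I(d) = 1$ for every $d \ge |F|$, whence $r_I \le |F|$. (If $\Delta$ has only one top-dimensional facet, then $\deg(S/I) = 1$, $\delta_I \equiv 1$, and there is nothing to prove.)

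The bound $r_I \le \dim(S/I)$ is now immediate: take $F$ to be a top-dimensional facet of $\Delta$, so that $\widetilde F = F$ and $|F| = k$. For the shellable case, fix a shelling $F_1, \dots, F_s$ of the facets of $\Delta$ and let $R = \mathcal{R}(F_s) \subseteq F_s$ be the restriction face of the \emph{last} facet, i.e.\ the unique minimal face of $F_s$ that is contained in no earlier facet $F_j$. Since $\Delta$ is not a simplex we have $s \ge 2$, hence $R \ne \emptyset$; and by minimality of $R$, no $F_j$ with $j < s$ contains $R$, so $F_s$ is the unique facet containing $R$ (and it is top-dimensional, as $\Delta$ is pure). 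By the shelling formula $h_\Delta(t) = \sum_i t^{|\mathcal{R}(F_i)|}$, the integer $|R|$ occurs as an exponent in $h_\Delta(t)$, so $|R| \le \deg h_\Delta(t) = \reg(S/I)$, the last equality being the standard description of the regularity of a Cohen--Macaulay Stanley--Reisner ring as the top degree of its $h$-vector. Applying the first paragraph with $F = R$ yields $r_I \le |R| \le \reg(S/I)$.

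For the Gorenstein case we may assume $\reg(S/I) < k$, since otherwise the first bound already gives $r_I \le k \le \reg(S/I)$. Decompose the Gorenstein complex as $\Delta = \langle W \rangle * \Sigma$, where $\langle W \rangle$ is the full simplex on the set $W$ of cone points of $\Delta$ and $\Sigma$ is a Gorenstein$^{*}$ complex on the remaining vertices. The variables indexed by $W$ do not occur in $I$, so $\reg(S/I) = \reg(\KK[\Sigma]) = \deg h_\Sigma(t) = \dim \Sigma + 1$, using that the $h$-vector of a Gorenstein$^{*}$ complex is symmetric with top coefficient $1$; in particular $\reg(S/I) < k$ forces $W \ne \emptyset$. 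Taking $F$ to be any facet of $\Sigma$, its only facet in $\Delta$ is $W \cup F$, which is top-dimensional, and $|F| = \dim \Sigma + 1 = \reg(S/I)$; so the first paragraph gives $r_I \le \reg(S/I)$.

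The routine ingredients here are the Stanley--Reisner bookkeeping --- the colon formula, the structure of Gorenstein complexes, and the equality $\reg(\KK[\Delta]) = \deg h_\Delta(t)$ for Cohen--Macaulay $\Delta$, which may require passing to an infinite residue field. The one point requiring genuine care is verifying that the chosen monomial $x_F$ indeed belongs to $\mathcal{F}_{|F|}$, that is, $0 < \deg(S/(I : x_F)) < \deg(S/I)$: the left inequality is exactly where we use that the unique facet through $F$ is top-dimensional, and the right inequality is where we use that $\Delta$ has at least two top-dimensional facets. Extending the shellable statement to non-pure shellable complexes would require running the argument with the last top-dimensional facet in the shelling (or exploiting the sequentially Cohen--Macaulay structure), and I expect this to be the main technical obstacle.
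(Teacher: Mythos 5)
Your route is genuinely different from the paper's. The paper proves the shellable case homologically (a filtration by partial intersections of the minimal primes, long exact sequences in local cohomology, and Proposition \ref{PropRegEqui}), and proves the dimension and Gorenstein bounds via $F$-purity, $a$-invariants and the $F$-pure threshold, with a reduction mod $p$ for characteristic zero. You instead exhibit a single low-degree face $F$ lying in a unique, top-dimensional facet, use $(I_\Delta:x_F)=I_{\st_\Delta(F)}=\p_{\widetilde F}$ to force $\delta_I(|F|)\le 1$, and locate such an $F$ combinatorially: the restriction face of the last facet of a shelling (whose cardinality is bounded by $\deg h_\Delta=\reg(S/I)$ in the Cohen--Macaulay case), and a facet of the Gorenstein$^*$ core in the Gorenstein case. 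This is more elementary and more explicit than the paper's argument, avoids characteristic $p$ entirely, and the shellable and Gorenstein parts are correct as written (the needed facts --- the interval structure $[\mathcal{R}(F_s),F_s]$, the shelling formula for the $h$-vector, $\reg=\deg h$ for Cohen--Macaulay Stanley--Reisner rings, and the core decomposition of Gorenstein complexes --- are all standard, though they should be cited).

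There is, however, a genuine gap in your proof of the first inequality $r_I\le\dim(S/I)$, which must hold for \emph{all} square-free monomial ideals, including those whose complex $\Delta$ is not pure. Your argument hinges on the assertion that ``$\delta_I(d)\ge 1$ always,'' but this is true only for unmixed $I$: by Proposition \ref{PropStableValue}, when $I$ is mixed the stable value of $\delta_I$ is $0$, so establishing $\delta_I(|F|)=1$ does not show that $\delta_I$ has stabilized at degree $|F|$ (it may drop to $0$ later), and your parenthetical claim that $\delta_I\equiv 1$ when there is a unique top-dimensional facet also fails in the mixed case. The repair is short: for mixed $I$, Proposition \ref{PropRegNotEqui} gives $r_I=\min\{t\mid [J_1/I]_t\neq 0\}$ where $J_1$ is the intersection of the top-dimensional minimal primes, and any facet $G$ of $\Delta$ of non-maximal dimension yields $x_G\in J_1\setminus I$ with $|G|\le\dim(S/I)-1$, so $r_I<\dim(S/I)$. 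With that case added (and with the pure case of the first bound rephrased so that it appeals to the unmixedness of $I$ rather than to an unconditional lower bound on $\delta_I$), your argument goes through.
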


We also have prove a similar result for ideals such that $S/I$ is a  $F$-pure ring.
These class of rings play an important role in the study of singularities in prime characteristic \cite{HRFpurity}.

\begin{theoremx}[{Theorem \ref{ThmFpureDim} \& \ref{ThmGorFpure}}]\label{MainReg}
Suppose that $\KK$ is a field of prime characteristic.
Let $I\subseteq S$ be an ideal such that $S/I$ if $F$-pure.
Then, $r_I\leq \dim(S/I) $. Moreover, if $I$ is  Gorenstein, then $r_I\leq \reg(S/I)$.
\end{theoremx}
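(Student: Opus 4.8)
The plan is to reduce the problem to a comparison of Hilbert functions of the minimal primes of $I$ and of their intersections, and then to feed in two consequences of $F$-purity. First the easy preliminaries: since $S/I$ is $F$-pure it is reduced, so $I$ is radical and Theorem~\ref{ThmNonDec} applies, making $\delta_I$ non-increasing; thus $r_I$ is just the least degree at which $\delta_I$ attains its eventual value. If $I$ is prime then $\delta_I$ is constant and $r_I=0$, so I may assume $I$ has at least two minimal primes. Because $I$ is radical, $(I:f)=\bigcap\{\p\in\Min(I):f\notin\p\}$, so $\deg(S/(I:f))$ depends only on which minimal primes contain $f$; a short computation (first in the equidimensional case, then tracking top-dimensional components in general) shows that the eventual value of $\delta_I$ is $e_0:=\min_{\p\in\Min(I)}\deg(S/\p)$, attained exactly by the $f$ lying in all minimal primes but one of minimal degree. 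Fixing such a prime $\p_0$ and setting $J_0:=\bigcap_{\p\ne\p_0}\p$, this translates into
\[
r_I\ \le\ \indeg(J_0/I)\ =\ \min\{\,e:\ H_{S/I}(e)>H_{S/J_0}(e)\,\},
\]
where I use that $J_0/I\cong (J_0+\p_0)/\p_0$ is a nonzero ideal of the domain $S/\p_0$, hence nonzero in every degree beyond its initial degree.

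The two inputs from $F$-purity are the following. First, $F$-purity implies $F$-injectivity, so the Frobenius acts injectively on each $H^i_{\m}(S/I)$; since these modules are Artinian (so vanish in large degree), a nonzero class in a positive degree would have Frobenius iterates in arbitrarily large degrees, a contradiction. Hence $H^i_{\m}(S/I)_{>0}=0$ for all $i$, and the graded form of Serre's formula relating Hilbert function, Hilbert polynomial, and local cohomology gives $H_{S/I}(e)=P_{S/I}(e)$ for every $e\ge 1$. Second, $J_0$ is an intersection of minimal primes of the $F$-pure ring $S/I$, hence compatibly split, so $S/J_0$ is again $F$-pure and therefore also satisfies $H_{S/J_0}(e)=P_{S/J_0}(e)$ for $e\ge 1$. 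Consequently $P:=P_{S/I}-P_{S/J_0}$ is a polynomial of degree $\le\dim(S/I)-1$ which equals $\dim_{\KK}(J_0/I)_e\ge 0$ at every $e\ge 1$ and is not identically zero (as $J_0/I\ne 0$); a nonzero polynomial of degree $\le\dim(S/I)-1$ cannot vanish at the $\dim(S/I)$ integers $1,2,\dots,\dim(S/I)$, so $P(e)>0$ for some such $e$, whence $\indeg(J_0/I)\le\dim(S/I)$ and $r_I\le\dim(S/I)$.

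For the Gorenstein statement I would sharpen the final step by duality rather than by counting roots. If $S/I$ is Gorenstein it is Cohen--Macaulay, hence equidimensional, $\reg(S/I)=a(S/I)+\dim(S/I)$, and $\omega_{S/I}\cong (S/I)(a(S/I))$. Then, using graded local duality over the Cohen--Macaulay ring $S/I$ and that $\p_0$ has the same dimension as $S/I$,
\[
J_0/I=(I:\p_0)/I\cong \Hom_{S/I}(S/\p_0,\,S/I)\cong \Hom_{S/I}(S/\p_0,\,\omega_{S/I})(-a(S/I))\cong \omega_{S/\p_0}(-a(S/I)),
\]
so $\indeg(J_0/I)=a(S/I)-a(S/\p_0)$. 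Since $a(S/\p_0)\ge -\dim(S/\p_0)=-\dim(S/I)$ — the standard lower bound for the $a$-invariant of a graded quotient of a polynomial ring, which also follows from $F$-purity of $S/\p_0$ — we conclude $r_I\le a(S/I)+\dim(S/I)=\reg(S/I)$.

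The step I expect to be the real obstacle is the non-equidimensional bookkeeping hidden in the first paragraph: the clean polynomial argument is written for equidimensional $I$, and one must verify that replacing $\deg$ by the multiplicity of top-dimensional components — and correspondingly adjusting which minimal prime plays the role of $\p_0$ — damages neither the identification of the eventual value of $\delta_I$ nor the inequality $r_I\le\indeg(J_0/I)$; it may be cleanest to handle this by a general hyperplane section reducing to lower dimension, which forces one to check that a general hyperplane section preserves $F$-purity. A secondary point to pin down carefully is that every intersection of minimal primes of an $F$-pure ring is again $F$-pure (compatible splitting), since that is exactly what licenses applying the vanishing $H^i_\m(-)_{>0}=0$ to $S/J_0$.
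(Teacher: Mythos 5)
Your reduction of $r_I$ to $\indeg(J_0/I)$ (with $\p_0$ a minimal prime of smallest degree, $J_0=\bigcap_{\p\neq\p_0}\p$) is exactly the paper's Propositions \ref{PropRegNotEqui} and \ref{PropRegEqui}, and your appeal to compatibility of intersections of minimal primes to get $F$-purity of $S/J_0$ is the paper's Remark \ref{RmKCompstibleFPT}. For the bound $r_I\le\dim(S/I)$ the key input is the same in both arguments, namely $a_i(S/I)\le 0$ and $a_i(S/J_0)\le 0$ from $F$-purity; the paper then runs the long exact sequence of local cohomology for $0\to J_0/I\to S/J_0\to S/I\to 0$ to get $a_i(J_0/I)\le 0$, hence $\indeg(J_0/I)\le\reg(J_0/I)\le\dim(S/I)$, whereas you use the Grothendieck--Serre formula to identify Hilbert function with Hilbert polynomial in positive degrees and count roots of $P_{S/I}-P_{S/J_0}$. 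Both work, and your worry about the mixed case is manageable: replacing $J_0$ by the intersection $J_1$ of the top-dimensional minimal primes (still compatible, still $F$-pure) makes the same root-count go through, which is what Proposition \ref{PropRegNotEqui} is for.

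The Gorenstein half is where you genuinely diverge. The paper bounds $a_i(J_0/I)\le-\fpt(S/I)$ and invokes $\reg(S/I)=d-\fpt(S/I)$ for Gorenstein $F$-pure rings (Theorem \ref{ThmDSNB}); you instead compute $J_0/I=(I:\p_0)/I\cong\Hom_{S/I}(S/\p_0,\omega_{S/I})(-a)\cong\omega_{S/\p_0}(-a)$ and read off $\indeg(J_0/I)=a(S/I)-a(S/\p_0)$. That duality computation is correct, and notably it uses only that $S/I$ is radical, Gorenstein and equidimensional --- not $F$-purity --- so if it closes it proves more than the theorem claims. But the closing step is the one genuine gap: you need $a(S/\p_0)\ge-\dim(S/\p_0)$, and your stated justification (``follows from $F$-purity of $S/\p_0$'') is wrong in direction --- $F$-purity yields the \emph{upper} bound $a(S/\p_0)\le 0$ via injectivity of Frobenius on $H^d_\m$, and says nothing about how negative the top degree can be. The inequality $a(R)\ge-\dim(R)$ for a standard graded domain is true, but it is not formal: with a linear system of parameters $\theta_1,\dots,\theta_d$ it amounts to the nonvanishing of the class of $1/(\theta_1\cdots\theta_d)$ in $H^d_\m(R)_{-d}$, i.e.\ to $(\theta_1\cdots\theta_d)^{s-1}\notin(\theta_1^s,\dots,\theta_d^s)$ for all $s$ --- the (graded) monomial theorem of Hochster. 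You should cite that (or restrict to the Cohen--Macaulay case of $S/\p_0$, where it is elementary), rather than attribute it to $F$-purity.
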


\section{Preliminaries}\label{prelim-section}

In this section we recall some well known notion and results that are needed throughout this manuscript.

Let $S=\KK[x_1,\ldots,x_n]=\bigoplus_{t=0}^{\infty} S_t$ be a polynomial ring over
a field $\KK$ with the standard grading and let $I\neq(0)$ be a homogeneous  ideal
of $S$.  Let $d$ denote the Krull dimension of $R=S/I$.

The {\it Hilbert function} of $S/I$, denoted $H_I$, is given by
$$
H_I(t)=\dim_\KK(R_{\leq t})=\dim_\KK(S_{\leq t}/I_{\leq t})=\dim_\KK (S_{\leq t}) - \dim_\KK (I_{\leq t}),
$$ 
where $I_{\leq t}=I\cap S_{\leq t}$. By a classical theorem of Hilbert
there is a unique polynomial
$h_I(t)\in\mathbb{Q}[t]$ of 
degree $d$ such that $H_I(t)=h_I(t)$ for  $t\gg 0$. 

The {\it Hilbert-Samuel multiplicity\/} or {\it degree\/}  of $R$, denoted
$\e(R)$, is the 
positive integer 
defined by
$\e(R)=
d! \lim\limits_{t\rightarrow\infty}{H_I(t)}/{t^{d}} $. 

Given an integer $t\geq 1$, let $\mathcal{F}_t$ be the set of 
all zero-divisors of $S/I$ not in $I$ of degree $t\geq 1$. That is 
$$
\mathcal{F}_t=\{\, f\in S_t\, \vert\, f\not\in I,\;  (I\colon f)\neq I \}.
$$
We note that $ (I\colon f)\neq I$ is equivalent to $f\in\p$ for some $\p\in\Ass_S(S/I)$, $Ass_S(S/I)$ is the set of associated primes of $S/I$.

\begin{definition}\rm 
The {\it minimum distance function\/} of $I$ is the function  
$\delta_I\colon \mathbb{N}_+\rightarrow \mathbb{Z}$ given by 
$$
\delta_I(t)=\left\{\begin{array}{ll}\e(S/I)-\max\{\e(S/(I,f))\vert\,
f\in\mathcal{F}_t\}&\mbox{if }\mathcal{F}_t\neq\emptyset,\\
\e(S/I)&\mbox{if\ }\mathcal{F}_t=\emptyset.
\end{array}\right.
$$
\end{definition}

\begin{definition} 
Let $I\subseteq S$ be a graded ideal and let 
${\mathbb F}_\star$ be the minimal graded free resolution of $S/I$ as an 
$S$-module: 
\[
{\mathbb F}_\star:\ \ \ 0\rightarrow 
\bigoplus_{j}S(-j)^{\beta_{gj}}
\stackrel{}{\rightarrow} \cdots 
\rightarrow\bigoplus_{j}
S(-j)^{\beta_{1j}}\stackrel{}{\rightarrow} S
\rightarrow S/I \rightarrow 0.
\]
The {\it Castelnuovo--Mumford regularity\/}\index{Castelnuovo--Mumford}
of $S/I$, {\it regularity} of $S/I$ for short, \index{regularity} 
is defined as 
$${\rm reg}(S/I)=\max\{j-i\vert\,
\beta_{ij}\neq 0\}.$$ 
\end{definition}

The following result show the asymptotic behavior of $\delta_I$ for a particular case of graded ideals.

An ideal $I\subseteq S$ is called {\it unmixed} if all its associated primes have the same height, in other case $I$ is {\it mixed}.


\begin{theorem}[{\cite[Theorem 3.8]{MBPV}}]\label{nonincreasingDelta}
Let $I\subseteq S$ be an unmixed radical homogeneous ideal. If all the associated primes of $I$ are generated by linear forms, then there is an integer $r_0\geq 1$ such that 
$$
\delta_I(1)>\delta_I(2)>\cdots>\delta_I(r_0)=\delta_I(d)=1\
\mbox{ for }\ d\geq r_0.
$$
\end{theorem}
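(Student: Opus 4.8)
The plan is to convert both $\e(S/I)$ and $\e(S/(I,f))$ into counts of top-dimensional components of $V(I)$, and then to force the strictly decreasing chain by multiplying a witness polynomial by carefully chosen linear forms. First I would write $I=\p_1\cap\cdots\cap\p_m$ with each $\p_i$ generated by linear forms and $\height(\p_i)=n-d$ for all $i$; this is possible because $I$ is radical and unmixed with linear associated primes, and it also gives $\Ass_S(S/I)=\{\p_1,\dots,\p_m\}$ (no embedded primes), so a homogeneous $f\notin I$ is a zero-divisor of $S/I$ exactly when $A_f:=\{i:f\in\p_i\}$ is nonempty. If $m=1$ then $S/I$ is a domain, $\mathcal{F}_t=\emptyset$ and $\delta_I(t)=\e(S/I)=1$ for every $t$, and one takes $r_0=1$; hence assume $m\ge2$, which forces $d\ge1$. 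By the associativity formula for multiplicities, using that $I_{\p_i}=(\p_i)_{\p_i}$ for each $i$ (the remaining $\p_j$ are not contained in $\p_i$, all heights being equal) and $\e(S/\p_i)=1$, one gets $\e(S/I)=m$; and for $f\in\mathcal{F}_t$ the $d$-dimensional minimal primes of $(I,f)$ are precisely the $\p_i$ with $i\in A_f$ (for $i\notin A_f$, $f$ is a nonzerodivisor on the domain $S/\p_i$, so $\dim S/(\p_i,f)<d$), each contributing length $1$, so that $\e(S/(I,f))=|A_f|$. Since such an $f$ is a zero-divisor not in $I$, $1\le|A_f|\le m-1$. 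Setting $g(t)=\max\{|A_f|:f\in\mathcal{F}_t\}$ (and $g(t)=0$ if $\mathcal{F}_t=\emptyset$), this yields $\delta_I(t)=m-g(t)$ with $0\le g(t)\le m-1$.

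The second step is monotonicity of $g$. Given $f\in\mathcal{F}_t$ with $|A_f|=g(t)$, pick $j_0\notin A_f$ and a linear form $\ell\notin\p_{j_0}$ (possible since $\height(\p_{j_0})\le n-1$). Then $\ell f$ has degree $t+1$, lies in every $\p_i$ with $i\in A_f$, is outside $\p_{j_0}$ hence outside $I$, and is a zero-divisor because $A_f\ne\emptyset$; thus $\ell f\in\mathcal{F}_{t+1}$ and $g(t+1)\ge|A_{\ell f}|\ge g(t)$. In particular $\delta_I$ is non-increasing; moreover $g(1)\ge1$ since any linear form in $\p_1\setminus\p_2$ lies in $\mathcal{F}_1$.

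The third step shows the decrease is strict until $g$ reaches $m-1$. Suppose $g(t)=k<m-1$, witnessed by $f\in\mathcal{F}_t$; then at least two indices, say $a$ and $b$, lie outside $A_f$. Because $\p_a\not\subseteq\p_b$ and both are generated by linear forms, there is a linear form $\ell\in\p_a\setminus\p_b$. Then $f':=\ell f$ has degree $t+1$, lies in $\p_i$ for $i\in A_f\cup\{a\}$ (so $|A_{f'}|\ge k+1$), is outside $\p_b$ (as $f\notin\p_b$ and $\ell\notin\p_b$) hence outside $I$, and is a zero-divisor; so $f'\in\mathcal{F}_{t+1}$ and $g(t+1)\ge k+1>g(t)$. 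Therefore the integer sequence $g(1)\le g(2)\le\cdots$ is bounded above by $m-1$ and strictly increases whenever its value is $<m-1$, so it attains $m-1$ at a first index $r_0$ and is constant thereafter. Translating back through $\delta_I=m-g$ gives $\delta_I(1)>\delta_I(2)>\cdots>\delta_I(r_0)=1$ and $\delta_I(t)=1$ for all $t\ge r_0$.

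The main obstacle is the first step: establishing that, under these hypotheses, $\e(S/(I,f))$ is exactly the number of linear components of $V(I)$ passing through $f$. This needs the associativity formula (so that the possible non-radicality of $(I,f)$ is irrelevant) together with the local computation $(S/(I,f))_{\p_i}\cong S_{\p_i}/(\p_i)_{\p_i}$. Once the identity $\delta_I(t)=m-g(t)$ is in hand, everything else reduces to repeated prime avoidance with linear forms.
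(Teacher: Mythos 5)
Your argument is correct. Note that the paper does not actually prove this statement --- it quotes it from \cite{MBPV} --- so the natural comparison is with the paper's own generalization in Section \ref{section-mindis}. There, Lemma \ref{LemmaDegAddf} and Theorem \ref{ThmNonDec} use exactly your two core ingredients: the additivity formula to rewrite $\e(S/I)-\e(S/(I,f))$ as $\sum_{\p\in\cA(I)\cap\cD(f)}\e(S/\p)$, i.e.\ a sum over the top-dimensional primes avoiding $f$ (which collapses to your count $m-|A_f|$ once each $\e(S/\p_i)=1$ because the $\p_i$ are linearly generated), and multiplication of a witness $f$ by a degree-one element outside a suitable prime to show $\cF_{t+1}\neq\emptyset$ and obtain monotonicity. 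What you add --- and what the paper's general radical-ideal version deliberately does not claim --- is the strict decrease; your choice of a linear form $\ell\in\p_a\setminus\p_b$ to enlarge $A_f$ while keeping $\ell f\notin I$ is precisely where the hypothesis that the associated primes are generated by linear forms enters, and it is justified correctly ($\ell$ exists because $\p_a\not\subseteq\p_b$ and $\p_a$ is linearly generated; $\ell f\notin\p_b$ because $\p_b$ is prime). The remaining small verifications also check out: $\lambda_{S_{\p_i}}(S_{\p_i}/(I,f)S_{\p_i})=1$ since $(I,f)S_{\p_i}=\p_iS_{\p_i}$ for $i\in A_f$, $g(1)\geq 1$ via a linear form in $\p_1\setminus\p_2$, and the degenerate cases $m=1$ and $\dim(S/I)=0$ are disposed of properly.
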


The integer $r_0$ where the stabilization occurs is called the {\it regularity index\/} of $\delta_I$ and is denoted by $r_I$. In Section \ref{section-mindis}, we show that one can define this index for any radical ideal.

\paragraph{\bf Local cohomology}
Let $R$ be a commutative Noetherian ring with identity and let $I$ be a homogeneous ideal generated by the forms $f_1,\ldots,f_{\ell}\in R$. Consider the \v{C}ech complex, \v{C}$^{\star}(\bar{f};R)$:
$$
 0\rightarrow R \rightarrow \bigoplus_{i}R_{f_i} \rightarrow \bigoplus_{i,j}R_{f_{i}f_{j}}
\rightarrow \cdots \rightarrow R_{f_1,\ldots,f_{\ell}}\rightarrow 0.
$$ 
where \v{C}$^{i}(\bar{f};R)=\bigoplus_{1\leq j_1\leq\ldots \leq j_i\leq \ell}R_{f_{j_1},\ldots,f_{j_i}}$ and the homomorphism in every summand is a localization map with appropriate sign. 

\begin{definition}
Let $M$ be a graded $R$-modue. The {\it $i$-th local cohomology of $M$ with support in $I$} is defined as 
\begin{center}
$H_{I}^{i}(M)= H^{i}($\v{C}$^{\star}(\bar{f};R)\otimes_{R} M)$.
\end{center}
\end{definition}

\begin{remark}\rm
Since $M$ is a graded $R$-module and $I$ is homogeneous the local cohomology module $H_{I}^{i}(M)$ is graded. 
\end{remark}

\begin{remark}\rm
If $\phi:M\rightarrow N$ is a homogeneous $R$-module homomorphism of degree $t$, then the induced $R$-module map $H_{I}^{i}(M)\rightarrow H_{I}^{i}(N)$ is homogeneous of degree $t$. 
\end{remark}

\begin{theorem}[Grothendieck's Vanishing Theorem] 
Let $M$ be an $R$-module of dimension $d$. Then, $H_{I}^{i}(M)=0$ for all $i> d$.
\end{theorem}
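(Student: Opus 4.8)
The plan is to establish the vanishing by the classical d\'evissage argument, inducting on $d=\dim M$, using only the \v{C}ech description of local cohomology recalled above together with the standard structural fact that every module $H_I^i(M)$ is $I$-torsion. First I would record two routine reductions. Since $R$ is Noetherian, local cohomology commutes with filtered direct limits, and $M$ is the filtered union of its finitely generated submodules $M_\lambda$, each with $\dim M_\lambda\le d$; so we may assume $M$ finitely generated. Also, the \v{C}ech complex $\check{C}^{\star}(\bar{f};R)\otimes_R M$ built on $\ell$ generators of $I$ already gives the crude bound $H_I^i(M)=0$ for $i>\ell$, which is harmless but shows the groups in question are genuine $R$-modules.

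For the base case $d=0$: here $M$ is a module over the Artinian ring $R/\ann(M)$, and by the independence-of-base property of local cohomology we may compute $H_I^i(M)$ over $R/\ann(M)$ with respect to the image of $I$. Decomposing $R/\ann(M)$ as a finite product of Artinian local rings, in each local factor the radical of the image of $I$ is either the unit ideal or the (nilpotent) maximal ideal; in both cases the generators involved localize the corresponding summand of $M$ to zero, so the \v{C}ech complex is concentrated in degree $0$. Hence $H_I^i(M)=0$ for all $i>0$.

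For the inductive step, suppose $d\ge 1$ and the theorem holds for all modules of dimension $<d$. I would first replace $M$ by $M/H_I^0(M)$: the submodule $H_I^0(M)$ is $I$-torsion, hence (over a Noetherian ring) has an injective resolution by $I$-torsion injectives, so $H_I^j(H_I^0(M))=0$ for $j\ge 1$; the long exact sequence of $0\to H_I^0(M)\to M\to M/H_I^0(M)\to 0$ then gives $H_I^i(M)\cong H_I^i(M/H_I^0(M))$ for all $i\ge 1$, and $\dim(M/H_I^0(M))\le d$. So we may assume $H_I^0(M)=0$; if $M=0$ we are done, and otherwise $I$ is contained in no associated prime of $M$, so by prime avoidance we may choose $x\in I$ that is a nonzerodivisor on $M$. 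Since $x$ then avoids every minimal prime of $\Supp(M)$, we have $\dim(M/xM)\le d-1$, so the induction hypothesis yields $H_I^i(M/xM)=0$ for $i\ge d$. Feeding $0\to M\stackrel{x}{\to}M\to M/xM\to 0$ into the long exact sequence of local cohomology, for $i>d$ the neighbouring terms $H_I^{i-1}(M/xM)$ and $H_I^i(M/xM)$ both vanish, so multiplication by $x$ is an automorphism of $H_I^i(M)$. But $H_I^i(M)$ is $I$-torsion and $x\in I$, so every element is killed by a power of $x$, while every power of $x$ acts bijectively; hence $H_I^i(M)=0$. This closes the induction.

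The argument is not deep, but it rests essentially on three inputs that I would state carefully rather than gloss over: (i) that $H_I^i(M)$ is $I$-torsion --- this powers both the vanishing of $H_I^{\ge 1}$ of $H_I^0(M)$ and the final ``$I$-torsion plus bijective multiplication by $x\in I$ forces zero'' step; (ii) the independence-of-base (and radical-only) invariance of local cohomology used to dispatch $d=0$; and (iii) the existence of the regular element $x\in I$, which needs $\Ass(M)$ finite and is the reason we reduce to $M$ finitely generated first. The only conceptual point is that cutting $M$ down by an $M$-regular element of $I$ lowers the top nonvanishing degree of $H_I^\bullet(M)$ by one.
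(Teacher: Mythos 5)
Your argument is the standard d\'evissage proof of Grothendieck's vanishing theorem and it is correct; the paper states this result only as classical background and gives no proof, so there is nothing internal to compare it against. One small imprecision in the base case: when the image of $I$ in an Artinian local factor contains a unit, the localizations $M_{f_i}$ are not zero --- rather the \v{C}ech complex of that factor is contractible (equivalently, $\sqrt{I}$ is the unit ideal there, so all $H_I^i$ vanish, including $i=0$) --- but the conclusion $H_I^i(M)=0$ for $i>0$ holds either way. Since the paper defines $H_I^i$ via the \v{C}ech complex, you should also justify input (i), the $I$-torsionness of $H_I^i(M)$, directly from that definition: inverting any generator $f_j$ turns the complex into the \v{C}ech complex of the unit ideal over $R_{f_j}$, which is acyclic, so $H_I^i(M)_{f_j}=0$ for every $j$ and hence every element is killed by a power of $I$.
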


\begin{theorem}[Grothendieck's Non-Vanishing Theorem] 
Let $M$ be a finitely generated  $R$-module of dimension $d$. Then, $H_{I}^d(M)\neq 0$.
\end{theorem}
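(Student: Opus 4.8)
The plan is to reduce Grothendieck's Non-Vanishing Theorem to the well-known case where $I=\m$ is the maximal ideal of a local ring, and then to handle the general case by a base-change argument. First I would observe that it suffices to treat the case where $R$ is local with maximal ideal $\m$: indeed, $H_I^i(M)\neq 0$ can be detected after localization, so picking a prime $\p\supseteq I$ with $\dim(M_\p)=d$ and passing to $R_\p$, one uses the flat base change $H_{IR_\p}^i(M_\p)\cong (H_I^i(M))_\p$. Thus we may assume $(R,\m)$ is local of dimension $d$ and, after replacing $I$ by its image, that $I\subseteq\m$. Since $\dim(M/IM)\ge\dim M-\grade(I,M)$ is not directly what we need, the cleanest route is to first dispose of the case $I=\m$ and then bootstrap.

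For the case $I=\m$: I would argue by induction on $d=\dim M$. When $d=0$, $M$ has finite length and $H_\m^0(M)=\Gamma_\m(M)=M\neq 0$ since every element of $M$ is killed by a power of $\m$. For the inductive step with $d>0$, one may assume $M$ has no submodule of finite length (quotient by $\Gamma_\m(M)$, which does not change $H_\m^d$ for $d>0$ by the long exact sequence); then $\m$ contains an $M$-regular element $x$, and the short exact sequence $0\to M\xrightarrow{x} M\to M/xM\to 0$ gives a surjection $H_\m^{d-1}(M/xM)\twoheadrightarrow (0:_{H_\m^d(M)}x)$. Since $\dim(M/xM)=d-1$, the induction hypothesis yields $H_\m^{d-1}(M/xM)\neq0$; combined with the fact that $H_\m^d(M)$ is Artinian (hence $\m$-power torsion, so $x$ acts locally nilpotently and $(0:_{H_\m^d(M)}x)=0$ would force $H_\m^d(M)=0$), we conclude $H_\m^d(M)\neq 0$. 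This is the technical heart of the argument.

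For a general ideal $I\subseteq\m$ in the local ring $(R,\m)$, I would pass to the $I$-adic situation via a Noether-type reduction: choose a prime $\p\in\Supp(M)\cap V(I)$ with $\dim(R/\p)=\dim(M/IM)$—actually we want $\p$ minimal over $I$ in $\Supp M$ realizing $\dim M$; here one uses that $\dim M=\dim(R/\q)$ for some $\q\in\Ass M$ and that $V(I)$ meets the closure of $\{\q\}$ in dimension $d$ precisely because $I\subseteq\m$ and $\m\in V(I)$ together with the dimension formula for local rings. Then $H_I^d(M)$ surjects onto (or relates via the natural map in the \v{C}ech or direct-limit description to) $H_\m^d(M)$, because every \v{C}ech cohomology class supported at $\m$ is in particular supported at $I$; more precisely the natural map $H_I^d(M)\to H_\m^d(M)$, induced by $I\subseteq\m$ on the direct system of Koszul/\v{C}ech complexes, is surjective in top degree $d=\dim M$ since the obstruction $H_\m^{d+1}(\cdot)$ vanishes by Grothendieck's Vanishing Theorem. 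Combining this surjectivity with $H_\m^d(M)\neq0$ from the previous paragraph finishes the proof.

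The main obstacle is the general-ideal step: establishing that the comparison map $H_I^d(M)\to H_\m^d(M)$ is surjective in the top degree $d=\dim M$. One handles this by the standard device of embedding the \v{C}ech complex on generators of $I$ into the \v{C}ech complex on generators of $\m$ (extending a generating set of $I$ to one of $\m$), taking the mapping-cone long exact sequence, and invoking Grothendieck vanishing above degree $d$ to kill the relevant connecting term; alternatively one can cite that $H_I^d(M)\cong H_\m^d(M)$ whenever $\sqrt I$ and $\m$ have the same radical on $\Supp M$ in top dimension, reducing to the $I=\m$ case outright. Either way, no further computation beyond these exact-sequence manipulations is needed.
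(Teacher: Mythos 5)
The paper does not actually prove this statement --- it is quoted in the preliminaries as a classical background result (and it is only ever used with $I=\m$) --- so I can only assess your argument on its own terms, and it has two genuine gaps. First, the ``general-ideal step'' cannot be repaired, because the statement for an arbitrary ideal $I$ is false as literally written: for $S=\KK[x,y]$, $M=S$ and $I=(x)$ we have $\dim M=2$ but $H^2_I(M)=0$, since the \v{C}ech complex on a single element has length one. Grothendieck's non-vanishing theorem is a statement about the (graded) maximal ideal, which is the only case the paper needs. Accordingly, your comparison map is both in the wrong direction and not surjective: the inclusion $I\subseteq\m$ gives $\Gamma_\m(M)\subseteq\Gamma_I(M)$ and hence natural maps $H^i_\m(M)\to H^i_I(M)$, not $H^i_I(M)\to H^i_\m(M)$, and in the example above no surjection $H^2_I(M)\to H^2_\m(M)$ can exist because the source is zero and the target is not. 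No mapping-cone or vanishing-above-$d$ argument can produce such a surjection, since it would prove a false theorem. Your opening localization reduction has the same defect: to use flat base change you would need a prime $\p$ minimal over $I$ (so that $\sqrt{IS_\p}=\p S_\p$) with $\dim M_\p=d$, and such a prime need not exist --- again $I=(x)$ shows this.

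Second, even in the essential case $I=\m$ your inductive step does not close. From the long exact sequence you correctly obtain a surjection $H^{d-1}_\m(M/xM)\twoheadrightarrow (0:_{H^d_\m(M)}x)$, and the induction hypothesis gives $H^{d-1}_\m(M/xM)\neq 0$; but a nonzero module can surject onto the zero module, so this does not yield $(0:_{H^d_\m(M)}x)\neq 0$. Your parenthetical establishes the implication ``$(0:_{H^d_\m(M)}x)=0$ implies $H^d_\m(M)=0$'', which runs the wrong way for what you need: assuming $H^d_\m(M)=0$, the long exact sequence merely says $H^{d-1}_\m(M/xM)\cong H^{d-1}_\m(M)/xH^{d-1}_\m(M)$, which is not a contradiction. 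This is precisely why non-vanishing is genuinely harder than the vanishing theorem; the standard proofs inject additional input --- pass to the completion and then use Noether normalization together with local duality over a regular or Gorenstein base (Bruns--Herzog, Theorem 3.5.7), or argue via attached primes/secondary representations (Brodmann--Sharp, Chapter 7 together with Theorem 6.1.4). If you restrict the statement to $I=\m$ and supply one of these ingredients for the inductive or duality step, the proof goes through; as written, both the reduction and the key step fail.
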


\begin{definition}
Let $M$ be an $R$-module with dimension $d$.  The {\it $a_i$-invariants}, $a_i(M)$, for $i=0,\ldots,d$ are defined as follows.
If $H_{\m}^{i}(M)\neq 0$, 
\begin{center}
$a_i(M)=\max\{\alpha \mid H_{\m}^{i}(M)_{\alpha}\neq 0\}$,  
\end{center} 
for $0\leq i \leq d$, where $H_{\m}^{i}(M)$ denotes the local cohomology module with support in the maximal ideal $\m$.  
If $H_{\m}^{i}(M)\neq 0$, we set $a_i(M)=-\infty.$

If $d=\dim(M)$, then, $a_d(M)$, is often just called the {\it $a$-invariant} of $M$. 
\end{definition}

The $a$-invariant, is a classical invariant  \cite{GW1}, and is closely related to the Castelnuovo-Mumford regularity.

\begin{definition}
Let $R$ be a positively graded ring and let $M$ be a finitely generated $R$-module. The {\it Castelnuovo--Mumford regularity of $M$}, $\reg(M)$, is defined as
$$
\reg(M)=\max\{a_i(M)+i \mid 1\leq i\leq d\}.
$$ 
\end{definition}  

\begin{remark}
If $M$ is a standard graded module of dimension $d$, then the $a$-invariant is related to the Castelnuovo--Mumford regularity, via the inequeality $a(M)+d\leq \reg(M)$ which is equality in the case Cohen--Macaulay.
\end{remark}

\begin{definition}
Suppose that $R$ has prime characteristic $p$. 
The Frobenius  map $F:R \rightarrow R$  is defined by  $r\mapsto r^p$.
\end{definition}

\begin{remark}
 If $R$ is reduced, $R^{1/p^{e}}$ the ring of the $p^{e}$-th roots of $R$ is well defined, and $R\subseteq R^{1/p^{e}}$.
\end{remark}

\section{Asymptotic behavior of the minimum distance function}\label{section-mindis}

In this section we prove that the minimum distance function $\delta_I$ is non-increasing. Then, the notion of regularity index of $\delta_I$ is well defined. We also find what is the stable value of the minimum distance function. We start this section establishing notation.

\begin{notation}
Given an ideal $I\subseteq S$, we set
\begin{align*}
\cA(I)&=\{\p\in\Ass_S(S/I)\; | \; \dim(S/I)=\dim(S/\p)\};\\
\cV(I)&=\{\p\in\Spec(S)\; |\; I\subseteq \p\};\\
\cD(I)&=\Spec(S)\setminus \cV(I).
\end{align*}
\end{notation}

\begin{remark}\label{RemAdditivity}
Given an  ideal $I\subseteq S$, then
$$
\e(S/I)=\sum_{\p\in\cA(I)} \lambda_{S_\p}(S_\p/IS_\p) \e(S/\p).
$$
In particular, if $I$ is radical,
$$
\e(S/I)=\sum_{\p\in\cA(I)}  \e(S/\p).
$$
\end{remark}

\begin{lemma}\label{LemmaDegAddf}
Suppose that $I$ is a radical ideal. 
Let $f\in\cF_t$ such that $\dim(S/(I,f))=\dim(S/I)$.
Then, $\cA((I,f))=\cA(I)\cap \cV(f)$.
Furthermore, 
$$\e(S/(I,f))=\sum_{\p\in \cA(I)\cap \cV(f)} \e(S/\p).$$
\end{lemma}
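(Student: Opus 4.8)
The plan is to reduce everything to a statement about minimal primes. Since $I$ is radical, $I = \bigcap_{\p \in \Min(I)} \p$, and $\cA(I)$ is precisely the set of minimal primes of maximal dimension $d = \dim(S/I)$. For $f \in \cF_t$, radicality of $I$ together with $(I\colon f) \neq I$ forces $f$ to lie in some associated (hence minimal) prime of $I$, but not in all of them, so $(I,f)$ is a proper ideal.

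First I would analyze the minimal primes of $(I,f)$. An element of $\Min((I,f))$ is a minimal element of $\cV(I) \cap \cV(f) = \{\p \supseteq I : f \in \p\}$. I claim that the minimal primes of $(I,f)$ that have dimension $d$ are exactly those $\p \in \Min(I)$ with $f \in \p$ and $\dim(S/\p) = d$, i.e. exactly $\cA(I) \cap \cV(f)$. Indeed, any such $\p$ contains $(I,f)$; it is minimal over $(I,f)$ because a prime strictly contained in it and containing $(I,f) \supseteq I$ would contradict minimality of $\p$ over $I$. Conversely, suppose $\q$ is a minimal prime of $(I,f)$ with $\dim(S/\q) = d$. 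Then $\q \supseteq I$, so $\q$ contains some $\p_0 \in \Min(I)$; since $\dim(S/\q) = d$ and $d$ is the maximal dimension among primes containing $I$, we get $\dim(S/\p_0) = d$ and forcefully $\q = \p_0$, so $\q \in \cA(I)$; and $f \in \q$ gives $\q \in \cV(f)$. The hypothesis $\dim(S/(I,f)) = \dim(S/I) = d$ guarantees that $\Min((I,f))$ actually contains primes of dimension $d$, so $\cA((I,f)) = \Min((I,f))_{\dim = d} = \cA(I) \cap \cV(f)$, which is the first assertion. (One should also check $\cA(I) \cap \cV(f) \neq \emptyset$, which follows from $\dim(S/(I,f)) = d$.)

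For the multiplicity formula, I would apply Remark \ref{RemAdditivity} to the ideal $(I,f)$: since $\e(S/(I,f)) = \sum_{\p \in \cA((I,f))} \lambda_{S_\p}(S_\p/(I,f)S_\p)\, \e(S/\p)$, it remains to show $(I,f)S_\p$ is the maximal ideal $\p S_\p$ of the Artinian local ring $S_\p$ — equivalently that the length is $1$ — for each $\p \in \cA((I,f)) = \cA(I) \cap \cV(f)$. But $\p \in \Min(I)$ and $I$ radical give $IS_\p = \p S_\p$ (the localization of a radical ideal at a minimal prime is the maximal ideal), so $(I,f)S_\p \supseteq IS_\p = \p S_\p$, forcing $(I,f)S_\p = \p S_\p$ and length $1$. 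Substituting gives $\e(S/(I,f)) = \sum_{\p \in \cA(I) \cap \cV(f)} \e(S/\p)$.

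The main obstacle is the bookkeeping around dimension: making sure that "minimal over $(I,f)$" interacts correctly with "maximal dimension among primes over $I$," and in particular that no new minimal prime of $(I,f)$ of dimension $d$ can appear that was not already minimal over $I$. This is where the hypothesis $\dim(S/(I,f)) = \dim(S/I)$ is used twice — once to ensure the top-dimensional stratum of $\cV(I) \cap \cV(f)$ is nonempty, and once (implicitly) to identify it with the top-dimensional minimal primes of $I$ lying on $V(f)$. Everything else is a routine application of Remark \ref{RemAdditivity} and the elementary fact that radical ideals localize to maximal ideals at their minimal primes. \QED
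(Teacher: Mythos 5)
Your proposal is correct and follows essentially the same route as the paper: both establish $\cA((I,f))=\cA(I)\cap\cV(f)$ by a two-way inclusion via dimension bookkeeping with minimal primes, and both deduce the multiplicity formula from the additivity formula (Remark \ref{RemAdditivity}) after noting that $(I,f)S_\p=IS_\p=\p S_\p$ has length one because $I$ is radical. The only cosmetic difference is that you phrase the set equality in terms of minimal primes of $(I,f)$ of top dimension, while the paper works directly with associated primes; these coincide, as you note.
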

\begin{proof}
Let $J=(I,f)$.
Let $Q$ be an associated prime of $J$. Since $I\subseteq J$, there exists an associated prime  $\p$ of $I$
such that $\p\subseteq Q.$ If $\dim(S/Q)=\dim(S/J)=\dim(S/I),$ then $\p=Q$. Thus,
$\cA(J)\subseteq \cA(I)\cap \cV(f)$.

Let $Q\in \cA(I)\cap \cV(f)$. Then, $J\subseteq Q$ and $\dim(S/Q)=\dim(S/J).$
Then, $Q$ is a minimal prime of $S/J$. Thus, $Q\in \Ass_S(S/J)$, and so, $Q\in\cA(J).$

We note that $J$ is not necessarily radical. However, $JS_\p=IS_\p$ for every $\p\in \cA(I)\cap \cV(f).$
Thus, $\lambda_{S_\p}(S_\p/IS_\p)=1 $ for every $\p\in \cA(I)\cap \cV(f).$
Then, 
$$\e(S/(I,f))=\sum_{\p\in \cA(I)\cap \cV(f)} \e(S/\p)$$
by the additivity formula.
\end{proof}

We now show that the minimum distance function is non-increasing.

\begin{theorem}\label{ThmNonDec}
Suppose that $I$ is a radical ideal.
Then, $\delta_{I}(d)$ is a non-increasing function.
\end{theorem}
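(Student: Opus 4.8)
The plan is to show that for every $t \geq 1$ we have $\delta_I(t+1) \leq \delta_I(t)$ by comparing the maximum degrees appearing in the two defining formulas. If $\cF_{t+1} = \emptyset$ then $\cF_t = \emptyset$ as well (a zero-divisor of degree $t$ times any linear form not in the relevant prime produces... actually more carefully: if $f \in \cF_t$ and $\ell$ is a linear form, one needs $\cF_{t+1}$ nonempty), so in that degenerate case $\delta_I(t+1) = \e(S/I) = \delta_I(t)$ and there is nothing to prove. So assume $\cF_{t+1} \neq \emptyset$, pick $f \in \cF_{t+1}$ achieving the maximum $\e(S/(I,f))$, and let $\p \in \Ass_S(S/I)$ with $f \in \p$. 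The key idea is to replace $f$ by a well-chosen element of $\cF_t$ whose colon ideal is no larger, so that $\e(S/(I,g)) \geq \e(S/(I,f))$ for some $g \in \cF_t$; since $\delta_I(t) = \e(S/I) - \max_{g \in \cF_t}\e(S/(I,g))$, this yields $\delta_I(t) \leq \delta_I(t+1)$... wait, that's the wrong direction.

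Let me reconsider the direction. We want $\delta_I(t+1) \leq \delta_I(t)$, i.e. $\max_{g \in \cF_t}\e(S/(I,g)) \leq \max_{f \in \cF_{t+1}}\e(S/(I,f))$. So given $g \in \cF_t$ realizing the maximum in degree $t$, I must produce $f \in \cF_{t+1}$ with $\e(S/(I,f)) \geq \e(S/(I,g))$. The natural candidate is $f = \ell g$ for a suitable linear form $\ell$. First I would handle the case $\dim(S/(I,g)) < \dim(S/I)$: then $\e(S/(I,g)) = 0$ by convention (or one defines the degree of a lower-dimensional ring to be $0$ in this context), and since $\e(S/(I,f)) \geq 0$ for any $f \in \cF_{t+1}$, we are done provided $\cF_{t+1} \neq \emptyset$, which holds because $\ell g \in \cF_{t+1}$ for any $\ell$ with $\ell \notin \p$ chosen so that $\ell g \notin I$ (such $\ell$ exists since $\Ass_S(S/I)$ is finite and the ideal is proper). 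So assume $\dim(S/(I,g)) = \dim(S/I)$. By Lemma~\ref{LemmaDegAddf}, $\e(S/(I,g)) = \sum_{\q \in \cA(I) \cap \cV(g)} \e(S/\q)$. Now choose a linear form $\ell$ lying in some prime $\p_0 \in \cA(I) \cap \cV(g)$ (which is nonempty) — possible because all associated primes, in particular this minimal prime of the dimension stratum, are generated by linear forms, or more simply just pick any linear form in $\p_0$ — and also arrange $\ell g \notin I$. Then $\ell g \in \cF_{t+1}$, and $\cV(\ell g) = \cV(\ell) \cup \cV(g) \supseteq \cV(g)$, so $\cA(I) \cap \cV(\ell g) \supseteq \cA(I) \cap \cV(g)$. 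If $\dim(S/(I,\ell g)) = \dim(S/I)$ we conclude by Lemma~\ref{LemmaDegAddf} that $\e(S/(I,\ell g)) = \sum_{\q \in \cA(I) \cap \cV(\ell g)} \e(S/\q) \geq \sum_{\q \in \cA(I) \cap \cV(g)} \e(S/\q) = \e(S/(I,g))$, giving the desired inequality.

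The main obstacle is the bookkeeping around the dimension drop and the existence of the auxiliary linear form: I must ensure simultaneously that (i) $\ell g \notin I$, so that $\ell g \in \cF_{t+1}$, (ii) adding $\ell g$ does not drop the dimension below $\dim(S/I)$ when $\e(S/(I,g)) > 0$ — this is automatic since $\cV(I,\ell g) \supseteq \cA(I) \cap \cV(g) \neq \emptyset$ shows $(I, \ell g)$ is contained in a prime of the right dimension — and (iii) the case analysis when $\e(S/(I,g)) = 0$ is handled separately as above. Condition (i) is where finiteness of $\Ass_S(S/I)$ and $S/I$ reduced come in: $g \notin I$ and $I$ radical means $g$ avoids at least one minimal prime $\p_1$; pick $\ell \notin \p_1$, then $\ell g \notin \p_1 \supseteq$... one needs $\ell g \notin I$, equivalently $\ell g$ outside some associated (= minimal) prime, so take $\ell \notin \p_1$ with $\p_1$ a minimal prime not containing $g$; since $\p_1$ is generated by linear forms (or just: has height $< n$), such a linear form $\ell$ exists, and then $\ell g \notin \p_1$, hence $\ell g \notin I$. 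Assembling these pieces carefully, together with the two boundary cases ($\cF_{t+1}$ empty; $\e(S/(I,g)) = 0$), completes the argument.
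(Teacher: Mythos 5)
Your proposal follows essentially the same route as the paper's proof: pick a maximizer $g\in\cF_t$ of $\e(S/(I,g))$, multiply by a linear form $\ell$ with $\ell g\notin I$ to land in $\cF_{t+1}$, and use Lemma~\ref{LemmaDegAddf} together with the containment $\cA(I)\cap\cV(g)\subseteq\cA(I)\cap\cV(\ell g)$ to get $\e(S/(I,\ell g))\geq \e(S/(I,g))$, hence $\delta_I(t+1)\leq\delta_I(t)$. (The paper takes $\ell$ to be a variable $x_i$, which exists because $\m$ is not an associated prime of the radical ideal $I\neq\m$, so $\m g\not\subseteq I$.)

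One step, as you wrote it, would fail: you require $\ell$ to lie in some $\p_0\in\cA(I)\cap\cV(g)$ and justify this by saying the associated primes are generated by linear forms. That hypothesis is not available here --- removing it is precisely what distinguishes Theorem~\ref{ThmNonDec} from Theorem~\ref{nonincreasingDelta} --- and a minimal prime of a general radical ideal need not contain any linear form at all (e.g.\ $(xy-z^2)$). Fortunately the condition $\ell\in\p_0$ is never actually used: $\ell g$ is a zero-divisor simply because $g$ is (if $g\in\p$ for some associated prime $\p$, then $\ell g\in\p$), and the containment $\cV(\ell g)=\cV(\ell)\cup\cV(g)\supseteq\cV(g)$ holds for every $\ell$. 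Deleting that requirement and keeping only your condition (i) --- choose $\ell$ outside a minimal prime $\p_1$ not containing $g$, so that $\ell g\notin I$; such a linear form exists since $\p_1\neq\m$ --- gives a correct argument identical in substance to the paper's. Your handling of the boundary cases ($\cF_{t+1}=\emptyset$, and the convention $\e(S/(I,g))=0$ when the dimension drops, a point the paper's own proof also passes over silently) is consistent with the rest of the argument.
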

\begin{proof}

If $\cF_t= \emptyset$ for every $t\geq 1$, then $\delta_I(t)=\e(S/I)$, which is the maximum value. We note that this case is equivalent to $I$ being a prime ideal. 

We now assume that $\cF_t\neq \emptyset$ for some $t\in\NN$.  We note that in this case $\dim(S/I)\neq 0$, 
otherwise, $I=\m$ and so $\cF_t= \emptyset$.
Let $f\in\cF_t$ such that $\delta_I(t)=\e(S/I)-\e(S/(I,f))$.
Then,
\begin{align*}
\delta_I(t)&=\e(S/I)-\e(S/(I,f))\\
&=\sum_{\p\in \cA(I)} \e(S/\p) - \sum_{\p\in \cA((I,f))} \e(S/\p)\\
&=\sum_{\p\in \cA(I)} \e(S/\p) - \sum_{\p\in \cA(I)\cap \cV(f)} \e(S/\p)\\
&=\sum_{\p\in \cA(I)\cap \cD(f)} \e(S/\p).
\end{align*}
Since $I$ is radical and $\dim(S/I)>0$, we have that  $\m$ is not an associated prime.
Then, $\m f\not\subseteq I$ because $f\not\in I$. We conclude that
there exists $i=1,\ldots, n$ such that $x_if\not\in I$. In particular,
$x_i f\in \cF_{t+1}$ and 
 $\cF_{t+1}\neq \emptyset.$
Then, 
\begin{align*}
\delta_I(t+1)&\leq \sum_{\p\in \cA(I)\cap \cD(x_i f)} \e(S/\p)\\
& = \sum_{\p\in \cA(I)\cap \cD(x_i)\cap \cD(f)} \e(S/\p)\\
&\leq \sum_{\p\in \cA(I)\cap \cD(f)} \e(S/\p)=\delta_I(t).
\end{align*}
\end{proof}

Thanks to the previous theorem we have that the minimum distance function eventually stabilizes, and it has a regularity index.

\begin{definition}\label{definition-r_I}
Suppose that $I$ is a radical ideal. The \emph{regularity index of $I$}, denoted by $r_I$, is defined by
$$
r_I=\min\{s\in\NN\; |\;\ \delta_I(s)=\lim\limits_{t\to\infty} \delta_I(t)\}.
$$
\end{definition}

\begin{proposition}\label{PropStableValue}
Suppose that $I$ is a radical ideal.
Then,
$$
 \delta_{I}(t)=\min\{\;\e(S/\p)\; |\; \p\in\Ass_S(S/ I)\;\}
$$
for $t\gg 0$ if $I$ is unmixed and
$ \delta_{I}(t)=0$ for $t\gg 0$ otherwise.
\end{proposition}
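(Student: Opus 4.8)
The plan is to extract $\delta_I(t)$ from the computation carried out in the proof of Theorem~\ref{ThmNonDec}. There it is shown that whenever $\cF_t\neq\emptyset$ and $f\in\cF_t$ attains the maximum defining $\delta_I(t)$, one has
\[
\delta_I(t)=\sum_{\p\in\cA(I)\cap\cD(f)}\e(S/\p);
\]
such an $f$ cannot lower the dimension, since otherwise $\e(S/(I,f))=0$ would not be maximal (as $\e(S/I)>0$), so Lemma~\ref{LemmaDegAddf} legitimately applies to it. Consequently $\delta_I(t)$ is always a sum of the positive integers $\e(S/\p)$, $\p\in\cA(I)$, over some subset $\cA(I)\cap\cD(f)$ of $\cA(I)$; since $\cA(I)$ is finite and $\delta_I$ is non-increasing, the statement reduces to identifying, for $t\gg0$, the minimum of $\sum_{\p\in\cA(I)\cap\cD(f)}\e(S/\p)$ over $f\in\cF_t$.

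First I would clear away the case $\cF_t=\emptyset$ for all $t$, which by the proof of Theorem~\ref{ThmNonDec} holds exactly when $I$ is prime; then $I$ is unmixed, $\Ass_S(S/I)=\{I\}$, and $\delta_I(t)=\e(S/I)=\min\{\e(S/\p)\mid\p\in\Ass_S(S/I)\}$. Otherwise $\Ass_S(S/I)$ has at least two primes, all of them minimal over $I$ since $I$ is radical; in particular $I\neq\m$, hence $\m\notin\Ass_S(S/I)$, and $\cF_t\neq\emptyset$ for all $t\gg0$ (multiply a homogeneous zero divisor lying in one associated prime but not in another by powers of a suitable variable).

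The core of the argument is a construction producing, for $t\gg0$, elements of $\cF_t$ realizing small sums. Fix $\p_0\in\cA(I)$. Prime avoidance gives $\bigcap_{\p\in\Ass_S(S/I),\,\p\neq\p_0}\p\not\subseteq\p_0$, since otherwise some minimal prime $\p\neq\p_0$ of $I$ would lie in $\p_0$ and hence equal it; picking a homogeneous $g$ in $\bigcap_{\p\neq\p_0}\p$ but not in $\p_0$, together with a variable $x_i\notin\p_0$ (available as $\p_0\neq\m$), the element $f=g\,x_i^{t-\deg g}$ lies in $\cF_t$ for $t\geq\deg g$, is not in $I$, keeps the dimension of $S/I$ (each $\p\neq\p_0$ remains minimal over $(I,f)$, and at least one such $\p$ is in $\cA(I)$), and has $\cA(I)\cap\cD(f)=\{\p_0\}$, contributing the value $\e(S/\p_0)$. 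Taking $\p_0$ of least degree shows $\delta_I(t)\leq\min\{\e(S/\p)\mid\p\in\cA(I)\}$ for $t\gg0$. When $I$ is mixed I would instead observe that $J:=\bigcap_{\p\in\cA(I)}\p$ strictly contains $I$ (again prime avoidance and minimality forbid $J\subseteq\q$ for an associated prime $\q\notin\cA(I)$), pick a homogeneous $h\in J\setminus I$, and, since $h$ lies outside some $\q\in\Ass_S(S/I)\setminus\cA(I)$ with $\q\neq\m$, multiply $h$ by a power of a variable outside $\q$ to obtain, for $t\gg0$, an $f\in\cF_t$ with $\cA(I)\cap\cD(f)=\emptyset$; this gives $\delta_I(t)\leq0$, and since $\delta_I(t)\geq0$ always (the displayed sum is nonnegative), $\delta_I(t)=0$ for $t\gg0$.

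It remains to match these upper bounds with lower bounds in the unmixed case. If $I$ is unmixed then $\cA(I)=\Ass_S(S/I)$, and for every $f\in\cF_t$ we have $\dim(S/(I,f))=\dim(S/I)$, because $f$, being a zero divisor of $S/I$, lies in some minimal prime $\q$ of $I$, which has $\dim(S/\q)=\dim(S/I)$ and contains $(I,f)$; hence Lemma~\ref{LemmaDegAddf} applies and, since $f\notin I$ excludes at least one associated prime from $\cV(f)$, it yields $\e(S/(I,f))=\sum_{\p\in\cA(I)\cap\cV(f)}\e(S/\p)\leq\e(S/I)-\min\{\e(S/\p)\mid\p\in\Ass_S(S/I)\}$, so $\delta_I(t)\geq\min\{\e(S/\p)\mid\p\in\Ass_S(S/I)\}$. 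Combined with the construction this is an equality for $t\gg0$, with the stabilization point bounded by the degree of the witness $g$ (resp.\ $h$). The step I expect to be most delicate is the construction: one must simultaneously arrange that $f$ lies in the prescribed associated primes and outside $I$, that $(I,f)$ retains the dimension of $S/I$ so that Lemma~\ref{LemmaDegAddf} is available, and that the degree of the witness can be taken uniform — the last point being exactly where $\m\notin\Ass_S(S/I)$ enters.
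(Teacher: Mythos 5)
Your argument is correct and follows essentially the same route as the paper's: the lower bound in the unmixed case comes from the observation that every $f\in\cF_t$ avoids at least one associated prime combined with Lemma~\ref{LemmaDegAddf}, and the upper bounds come from explicit witnesses lying in all but one (resp.\ all) of the top-dimensional associated primes. The only real difference is cosmetic --- you manufacture witnesses in every large degree by multiplying by powers of a variable, whereas the paper produces a single witness and propagates with the monotonicity from Theorem~\ref{ThmNonDec} --- and both arguments rest on the same implicit convention as the paper in the mixed case, namely that $\e(S/(I,f))\leq\e(S/I)$ for every $f\in\cF_t$ (equivalently, that a dimension-dropping $f$ contributes degree $0$), which is what turns the inequality $\delta_I(t)\leq 0$ into an equality.
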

\begin{proof}
We first assume that $I$ is mixed. 
Let $J_1$  be the intersection of the minimal primes of $I$ of dimension $\dim(S/I)$ and let $J_2$ be the intersection of the minimal primes of $I$ of dimension smaller than $\dim(S/I)$.
Let $f\in J_1\setminus I$.  Let $\alpha=\deg(f)$. 
In particular, $f\in\cF_\alpha.$
We note that $\dim(S/I)=\dim(S/(I,f))$ and $\cA(I)=\cA(I,f)$, and so, $\e(S/(I,f))=\e(S/I)$.
We conclude that $\delta_I(t)=0.$
Since $\delta_I$ is nondecresing by Theorem \ref{ThmNonDec}, we obtain that $\delta_I(t)=0$ for $t\geq \alpha.$

We now assume that $I$ is unmixed. Then, $\cA(I)=\Ass_S(S/I).$
If $I$ is a prime ideal, then $\delta_I(t)=\e(S/I)$ for every $t\in\NN$, and our claim follows. We assume that $\dim(S/I)>0$ and that  $I$ is not a prime ideal.
For every $f\in\cF_t,$
there exists a prime ideal $\p$ such that $f\not\in I$.
Then,
$$\e(S/I)-\e(S/(I,f))\geq \e(S/\p)\geq \min\{\;\e(S/\p)\; |\; \p\in\Ass_S(S/ I)\;\}.$$
We conclude that $\delta_I(t)\geq \min\{\;\e(S/\p)\; |\; \p\in\Ass_S(S/ I)\;\}.$
Let $\p_1,\ldots, \p_\ell$ denote the associated primes of $I$ in an order such that $e(S/\p_i)\leq \e(S/\p_j)$ for $i\geq j$.
Let $f\in \p_2\cap\ldots \cap \p_\ell\setminus I.$
Let $\alpha=\deg(f)$. We have that $f\in\cF_\alpha$.
Then, 
$\delta_I(\alpha)\leq \e(S/I)-\e(S/(I,f))=\e(S/\p_1).$ 
Since $\delta$ is nondecresing by Theorem \ref{ThmNonDec}, we obtain that $\delta_I(t)\leq\e(S/\p_1)$ for $t\geq \alpha.$
We conclude that $\delta_I(t)=\e(S/\p_1)$ for $t\geq \alpha.$ 
\end{proof}

\begin{proposition}\label{PropRegNotEqui}
Let $I$ be a mixed radical ideal.
Let $J_1$  be the intersection of the minimal primes of $I$ of dimension $\dim(S/I)$ and let $J_2$ be the intersection of the minimal primes of $I$ of dimension smaller than $\dim(S/I)$.
Then,
$r_I=\min \{t \; | \; [J_1/I]_t\neq 0\}.$
\end{proposition}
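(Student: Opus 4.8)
The plan is to combine the description of the stable value of $\delta_I$ from Proposition~\ref{PropStableValue} with the non-increasing property from Theorem~\ref{ThmNonDec} and the degree computation already carried out in the proof of Theorem~\ref{ThmNonDec}. Since $I$ is mixed, Proposition~\ref{PropStableValue} tells us that $\delta_I(t)=0$ for $t\gg 0$, so $r_I=\min\{s\in\NN \mid \delta_I(s)=0\}$. Thus it suffices to identify the smallest degree $t$ at which $\delta_I(t)=0$ and show it equals $\min\{t \mid [J_1/I]_t\neq 0\}$. Note first that $[J_1/I]_t\neq 0$ if and only if there is an $f\in (J_1)_t\setminus I$, because $I\subseteq J_1$.

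The key step is the following equivalence: for $f\in\cF_t$, one has $\e(S/I)-\e(S/(I,f))=0$ if and only if $f\in J_1$. For the ``if'' direction: if $f\in J_1\setminus I$, then $f$ lies in every minimal prime of $I$ of top dimension, so $\cV(f)\supseteq \cA(I)$, hence by the degree formula from the proof of Theorem~\ref{ThmNonDec}, $\delta_I$ evaluated via this $f$ gives $\sum_{\p\in\cA(I)\cap\cD(f)}\e(S/\p)=0$; since $\delta_I(t)\geq 0$ always, this forces $\delta_I(t)=0$. Conversely, suppose $\delta_I(t)=0$; pick $f\in\cF_t$ realizing the minimum, so $\sum_{\p\in\cA(I)\cap\cD(f)}\e(S/\p)=0$. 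Since each $\e(S/\p)>0$, we get $\cA(I)\cap\cD(f)=\emptyset$, i.e. $f\in\p$ for every $\p\in\cA(I)$. As $I$ is radical, $J_1=\bigcap_{\p\in\cA(I)}\p$ (the minimal primes of top dimension are exactly the elements of $\cA(I)$), so $f\in J_1$. We also need $f\notin I$, which holds since $f\in\cF_t\subseteq S_t\setminus I$. Hence $[J_1/I]_t\neq 0$.

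Putting these together: $\delta_I(t)=0$ holds for some (equivalently, the chosen maximizing) $f\in\cF_t$ exactly when $[J_1/I]_t\neq 0$, \emph{provided} $\cF_t\neq\emptyset$; but if $[J_1/I]_t\neq 0$ then any $f\in(J_1)_t\setminus I$ lies in an associated prime of $I$ (as $J_1\neq S$), so automatically $\cF_t\neq\emptyset$. Therefore $\{t \mid \delta_I(t)=0\}=\{t \mid [J_1/I]_t\neq 0\}$, and taking minima (using that $\delta_I$ is non-increasing by Theorem~\ref{ThmNonDec}, so the set $\{t\mid\delta_I(t)=0\}$ is an up-set and its minimum is $r_I$) yields $r_I=\min\{t \mid [J_1/I]_t\neq 0\}$.

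The main technical point to be careful about is the identity $J_1=\bigcap_{\p\in\cA(I)}\p$ and the fact that membership of $f$ in all top-dimensional minimal primes is equivalent to $\cA(I)\cap\cD(f)=\emptyset$ together with the sign/positivity of the $\e(S/\p)$; these are straightforward but must be stated cleanly. A secondary subtlety is handling the degenerate possibility that no $\cF_t$ is nonempty or that $J_1/I=0$, but since $I$ is mixed (hence not prime and with at least two minimal primes of differing dimension), $J_1\supsetneq I$ strictly — indeed $J_2\not\subseteq J_1$ forces elements of $J_1\setminus I$ in high degree — so $[J_1/I]_t\neq 0$ for $t\gg 0$ and the minimum on the right-hand side is well defined and finite.
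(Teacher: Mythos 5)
Your argument is correct and follows essentially the same route as the paper's: both reduce to showing that $\delta_I(t)=0$ exactly when $J_1\setminus I$ contains a form of degree $t$, using the formula $\delta_I(t)=\sum_{\p\in\cA(I)\cap\cD(f)}\e(S/\p)$ from the proof of Theorem \ref{ThmNonDec} together with the non-increasing property to identify $r_I$ with the first such degree. The only cosmetic differences are that you phrase the lower bound by contraposition from $\delta_I(t)=0$ rather than bounding $\delta_I(t)>0$ directly below the threshold, and that your unproved assertion $\delta_I(t)\geq 0$ should be justified as a consequence of Theorem \ref{ThmNonDec} and Proposition \ref{PropStableValue} (monotone and tending to $0$), which is exactly the implicit step the paper's own proof also takes.
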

\begin{proof}
As in the proof of Proposition \ref{PropStableValue}, we have that $\delta_I(t)=0$ for $t\geq \min \{t \; | \; [J_1/I]_t\neq 0\}.$ We conclude that $r_I\leq \min \{t \; | \; [J_1/I]_t\neq 0\}.$

Let $f\in \left(\bigcup_{\p\in\Ass_S(S/I)} \p\right)\setminus I$ of degree strictly less than $\min \{t \; | \; [J_1/I]_t\neq 0\}$. Then, $f\not\in J_1$, and so, there exists a prime ideal $\p$ such that $f\not\in \p$ and $\dim(S/\p)<\dim(S/I)$. 
Then, $e(S/I)-\e(S/(I,f))\geq \e(S/\p)$. We conclude that $\delta_I(t)>0.$ Then, $r_I\geq \min \{t \; | \; [J_1/I]_t\neq 0\}.$
\end{proof}

\begin{proposition}\label{PropRegEqui}
Suppose that $I$ is an unmixed radical ideal  with associated primes $\p_1,\ldots,\p_r$ and $ \q_1,\ldots, \q_s$ such that $\e(S/\p_i)=\min\{\e(S/Q)\; |\; Q\in\Ass_S(S/I) \}$. 
Let $J_i=\left( 
\bigcap_{j\neq i}\p_j
\right)
\bigcap 
\left(\bigcap^s_{j=1}\q_j \right).$
Then,
$r_I=\min \{t \; | \; \exists i \hbox{ such that }[J_i/I]_t\neq 0\}.$
\end{proposition}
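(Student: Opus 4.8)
The plan is to combine the combinatorial description of $\delta_I$ provided by Lemma \ref{LemmaDegAddf} with the limiting value of $\delta_I$ computed in Proposition \ref{PropStableValue}. Set $m=\min\{\e(S/Q)\mid Q\in\Ass_S(S/I)\}=\e(S/\p_1)$ and assume $I$ is not prime (the prime case being trivial). By Proposition \ref{PropStableValue} we have $\lim_{t\to\infty}\delta_I(t)=m$, so Definition \ref{definition-r_I} gives $r_I=\min\{t\mid\delta_I(t)=m\}$; thus it suffices to show that, for a positive integer $t$, one has $\delta_I(t)=m$ if and only if $[J_i/I]_t\neq 0$ for some $i\in\{1,\dots,r\}$.

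First I would record a formula for $\delta_I(t)$. Since $I$ is radical and unmixed, $\Ass_S(S/I)=\Min(S/I)$, and any $f\in\cF_t$ lies in some $\p\in\Ass_S(S/I)$ with $\dim(S/\p)=\dim(S/I)$; because $I\subseteq(I,f)\subseteq\p$, this forces $\dim(S/(I,f))=\dim(S/I)$. Hence Lemma \ref{LemmaDegAddf} applies to every $f\in\cF_t$, and together with Remark \ref{RemAdditivity} it yields
\[
\e(S/I)-\e(S/(I,f))=\sum_{\p\in\Ass_S(S/I),\ f\notin\p}\e(S/\p).
\]
Consequently $\delta_I(t)=\min_{f\in\cF_t}\sum_{\p\in\Ass_S(S/I),\,f\notin\p}\e(S/\p)$ whenever $\cF_t\neq\emptyset$, while $\delta_I(t)=\e(S/I)>m$ if $\cF_t=\emptyset$ (as $I$ is not prime, $\e(S/I)=\sum_{\p}\e(S/\p)>m$).

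Next I would determine when this minimum equals $m$. For homogeneous $f\notin I$ put $T_f=\{\p\in\Ass_S(S/I)\mid f\notin\p\}$, which is nonempty. Since $\e(S/\p)\ge m\ge 1$ for every associated prime and $\e(S/\q_k)>m$ for all $k$, we get $\sum_{\p\in T_f}\e(S/\p)\ge m$, with equality exactly when $T_f=\{\p_i\}$ for some $i\in\{1,\dots,r\}$. The condition $T_f=\{\p_i\}$ says precisely that $f\in\bigl(\bigcap_{j\ne i}\p_j\bigr)\cap\bigl(\bigcap_{k=1}^{s}\q_k\bigr)=J_i$ and $f\notin\p_i$ (and then $f\notin\p_i\supseteq I$ gives $f\notin I$, while $f$ lies in some associated prime other than $\p_i$, so $f\in\cF_t$ automatically). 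Therefore $\delta_I(t)=m$ if and only if $[J_i]_t\not\subseteq\p_i$ for some $i\in\{1,\dots,r\}$. Since $I$ is the intersection of all its minimal primes, $I=J_i\cap\p_i$, so $[I]_t=[J_i]_t\cap\p_i$ and hence $[J_i/I]_t\neq 0$ if and only if $[J_i]_t\not\subseteq\p_i$. Combining these facts, $\delta_I(t)=m$ if and only if $[J_i/I]_t\neq 0$ for some $i$, and taking the least such $t$ (the relevant set being nonempty because $J_i\supsetneq I$) gives $r_I=\min\{t\mid\exists\,i\text{ with }[J_i/I]_t\neq 0\}$.

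The step I expect to be the main obstacle is the one just sketched: one must check that among the nonempty subsets $T$ of $\Ass_S(S/I)$ the sum $\sum_{\p\in T}\e(S/\p)$ takes the minimal value $m$ only for the singletons $\{\p_i\}$ with $i\le r$ (using $\e(S/\q_k)>m$ and that multiplicities are at least $1$), and that each such singleton really is realized as $T_f$ by a homogeneous form of the prescribed degree --- which is exactly what the graded elements of $J_i\setminus\p_i$ provide. Everything else is bookkeeping with Lemma \ref{LemmaDegAddf}, Remark \ref{RemAdditivity} and Proposition \ref{PropStableValue}, together with the elementary graded identity $[I]_t=[J_i]_t\cap\p_i$.
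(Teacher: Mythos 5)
Your proof is correct and follows essentially the same route as the paper: both arguments reduce $\e(S/I)-\e(S/(I,f))$ to the sum of $\e(S/\p)$ over the associated primes missed by $f$ (via Lemma \ref{LemmaDegAddf} and the additivity formula), and both rest on the same dichotomy — the sum exceeds $m$ unless $f$ misses exactly one $\p_i$ of minimal multiplicity, i.e.\ $f\in J_i\setminus\p_i$. Your packaging of this as a single ``$\delta_I(t)=m$ iff some $[J_i/I]_t\neq 0$'' equivalence is a tidier presentation of the paper's two separate inequalities, but the underlying content is identical.
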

\begin{proof}
We set $\e=\e(S/\p_i).$
As in the proof of Proposition \ref{PropStableValue}, we have that $\delta_I(t)=e$ for $t\geq \min \{t \; | \; \exists i \hbox{ such that }[J_i/I]_t\neq 0\}.$ We conclude that $r_I\leq \min \{t \; | \; \exists i \hbox{ such that }[J_i/I]_t\neq 0\}.$

Let $f\in \left(\bigcup_{\p\in\Ass_S(S/I)} \p\right)\setminus I$ of degree strictly less than $\min \{t \; | \; \exists i \hbox{ such that }[J_i/I]_t\neq 0\}$. Then, $f\not\in J_i$ for every $i$, and so, either $f\not\in\q_j $ or $f$ does not belong to to different primes $\p_i$ nor $\p_j.$ In both cases, $\dim(S/I)=\dim(S/(I,f)).$
In the first case,
$\e(S/I)- \e(S/(I,f))\geq \e(S/\q_j)>e$. In the second case, 
$\e(S/I)-\e(S/(I,f))\geq 2\e>\e.$
We conclude that $\delta_I(t)\geq \e.$
 Then, $r_I\geq  \min \{t \; | \; \exists i \hbox{ such that }[J_i/I]_t\neq 0\}.$
\end{proof}

\section{Stanley--Reisner ideals associated to a shellable simplicial complex}

In this section we use the shellability condition to relate the regularity index of a Stanley--Reisner ideal of a shellable simplicial complex, $I_{\Delta}$, with the Castelnuovo--Mumford regularity.

\begin{definition}\rm
 A {\it simplicial complex\/} on a vertex set $X = \{x_1, x_2,\ldots, x_n\}$ is a collection of subsets of $X$, called {\it faces\/}, satisfying that $\{x_i\}\in \Delta$ for every $i\in [n]$ and, if $\sigma\in \Delta$ and $\theta\subseteq \sigma$ then $\theta\in \Delta$. A face of $\Delta$ not properly contained in another face of $\Delta$ is called a {\it facet\/}. 

 A face $\sigma\in \Delta$ of cardinality $\mid \sigma\mid=i+1$ has {\it dimension\/} $i$ and is called an $i$-{\it face} of $\Delta$. The {\it dimension\/} of $\Delta$ is $\dim \Delta=\max \{\dim \sigma \mid \sigma\in \Delta\}$, or if $\Delta=\{ \}$ is the void complex, which has no faces. We say that $\Delta$ is {\it pure\/} if all its facets have the same dimension.
\end{definition}

Let $\Delta$ be a simplicial complex of dimension $d$ with the vertex set $[n]=\{1,2,\ldots,n\}$, and let $\KK$ be a field. The square-free monomial ideal $I_{\Delta}$ in the polynomial ring $S = \KK[x_1,\ldots,x_n]$ is generated by the monomials $x^{\sigma} = \prod\limits_{i\in \sigma}^{} x_{i}$ which $\sigma$ is a non-face in $\Delta$.

The simplicial complex $\Delta$ is said Cohen-Macaulay when the quotient ring $\KK[\Delta] =S/I_{\Delta}$, called Stanley--Reisner ring of $\Delta$, is Cohen-Macaulay.

\begin{definition}\rm
A pure simplicial complex $\Delta$ of dimension $d$ is {\it shellable} if the facets of $\Delta$ can be order $\sigma_1,\ldots , \sigma_s$ such that 
$$
\bar{\sigma_i}\bigcap (\bigcup\limits_{j=1}^{i-1}\bar{\sigma_j})
$$
is pure of dimension $d-1$ for all $i\geq 2$. Here $\bar{\sigma_i}=\{\sigma\in \Delta\mid \sigma\subseteq \sigma_i\}$. If $\Delta$ is pure shellable, $\sigma_1,\ldots,\sigma_s$ is called a shelling.  
\end{definition}

\begin{theorem}[{\cite[Theorem 6.3.23]{Vila}}]\label{shellable-CM}
Let $\Delta$ be a simplicial complex. If $\Delta$ is pure shellable, then $\Delta$ is Cohen--Macaulay over any field $\KK$.
\end{theorem}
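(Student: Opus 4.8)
The plan is to derive the statement from Reisner's combinatorial criterion for Cohen--Macaulayness: $\KK[\Delta]=S/I_\Delta$ is Cohen--Macaulay over $\KK$ if and only if $\widetilde{H}_i(\mathrm{lk}_\Delta(\sigma);\KK)=0$ for every face $\sigma\in\Delta$ (including $\sigma=\emptyset$, where $\mathrm{lk}_\Delta(\emptyset)=\Delta$) and every $i<\dim\mathrm{lk}_\Delta(\sigma)$. When $\Delta$ is pure of dimension $d$, each link $\mathrm{lk}_\Delta(\sigma)$ is pure of dimension $d-|\sigma|$, so the statement reduces to two claims: (a) the link of any face of a pure shellable complex is again pure shellable, and (b) every pure shellable complex $\Gamma$ of dimension $e$ satisfies $\widetilde{H}_i(\Gamma;\KK)=0$ for all $i<e$. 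Granting (a) and (b), I would apply (b) to $\Gamma=\mathrm{lk}_\Delta(\sigma)$ for each $\sigma\in\Delta$: by (a) this $\Gamma$ is pure shellable of dimension $d-|\sigma|$, so (b) yields exactly Reisner's condition, and the theorem follows.

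For (b) I would induct on the number $s$ of facets $\sigma_1,\dots,\sigma_s$. If $s=1$ then $\Gamma=\bar{\sigma_1}$ is a simplex, hence contractible, and there is nothing to prove. If $s>1$, put $\Gamma'=\bigcup_{j=1}^{s-1}\bar{\sigma_j}$; it is pure shellable of dimension $e$ with $s-1$ facets (the shelling condition for indices $\le s-1$ is inherited), so the inductive hypothesis gives $\widetilde H_i(\Gamma';\KK)=0$ for $i<e$. The crux is the structure of the restriction $\mathcal R=\bar{\sigma_s}\cap\Gamma'$: by the shelling axiom it is a pure $(e-1)$-dimensional subcomplex of the boundary $\partial\bar{\sigma_s}$, hence is generated by some $k$ of the $e+1$ facets of $\partial\bar{\sigma_s}$. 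If $k=e+1$ then $\mathcal R=\partial\bar{\sigma_s}$ is an $(e-1)$-sphere; if $1\le k\le e$ then some vertex of $\sigma_s$ lies in all $k$ chosen facets, so $\mathcal R$ is a cone, hence contractible. Either way $\widetilde H_i(\mathcal R;\KK)=0$ for $i<e-1$. Now the reduced Mayer--Vietoris sequence
\[
\cdots\to\widetilde H_i(\Gamma';\KK)\oplus\widetilde H_i(\bar{\sigma_s};\KK)\to\widetilde H_i(\Gamma;\KK)\to\widetilde H_{i-1}(\mathcal R;\KK)\to\cdots,
\]
together with the contractibility of $\bar{\sigma_s}$, shows that for $i<e$ both neighbours of $\widetilde H_i(\Gamma;\KK)$ vanish (the left by induction, the right since $i-1<e-1$), forcing $\widetilde H_i(\Gamma;\KK)=0$. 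This closes the induction, and in fact shows $\Gamma$ is homotopy equivalent to a wedge of $e$-spheres.

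For (a) I would use the standard fact that a shelling restricts to links (Bj\"orner--Wachs): given a shelling $\sigma_1,\dots,\sigma_s$ of $\Delta$ and a face $\tau\in\Delta$, list in the induced order the faces $\sigma_i\setminus\tau$ over those facets $\sigma_i$ with $\tau\subseteq\sigma_i$; purity of $\Delta$ makes $\mathrm{lk}_\Delta(\tau)$ pure of dimension $d-|\tau|$, and one verifies, by tracking the restriction sets of the shelling, that this ordering of the facets of $\mathrm{lk}_\Delta(\tau)$ is again a shelling, so that (b) applies to it.

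The main obstacle is the bookkeeping in step (a): one must check carefully that the induced order on the facets of $\mathrm{lk}_\Delta(\tau)$ genuinely satisfies the shelling condition — keeping track of which faces of each $\bar{\sigma_i}$ persist in the link and confirming that the relevant intersection stays pure of dimension $d-|\tau|-1$. Once this and the dichotomy for $\mathcal R$ in (b) are in hand, the Mayer--Vietoris computation and the appeal to Reisner's criterion are formal.
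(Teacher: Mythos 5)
Your argument is correct, granting the two standard inputs you explicitly flag (Reisner's criterion, and the Bj\"orner--Wachs fact that a shelling of $\Delta$ induces a shelling of each link $\mathrm{lk}_\Delta(\tau)$); the Mayer--Vietoris step and the dichotomy for $\mathcal R$ (boundary sphere versus cone over a common vertex) are both carried out correctly. However, this is a genuinely different route from the one in the cited source \cite[Theorem 6.3.23]{Vila}, which is purely algebraic: there one inducts on the number of facets $\sigma_1,\dots,\sigma_s$, sets $\q=\p_{\sigma_1}\cap\cdots\cap\p_{\sigma_{s-1}}$ and $\p=\p_{\sigma_s}$ (the facet primes), and applies the depth lemma to the exact sequence
$$
0\to S/(\q\cap\p)\to S/\q\oplus S/\p\to S/(\q+\p)\to 0,
$$
the shelling hypothesis entering only to guarantee that $S/(\q+\p)$ is the face ring of a pure $(d-1)$-dimensional union of boundary facets of $\sigma_s$, hence Cohen--Macaulay of dimension exactly one less. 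That algebraic induction is self-contained and elementary (no local cohomology, no Reisner/Hochster machinery), and it is the same ``add one facet at a time'' scheme the present paper reuses in the proof of Theorem \ref{ThmShellable}. Your topological route costs more in prerequisites but buys more: the Mayer--Vietoris computation works over $\ZZ$ and shows each link has free homology concentrated in top degree, so you get Cohen--Macaulayness over every field simultaneously together with the finer homotopical information (wedge-of-spheres behavior), which the depth-lemma proof does not see. The only place where you should be careful if you write this up in full is step (a): your wedge-of-spheres remark is not literally delivered by the homology computation alone, and the induced shelling of the link does need the ``exchange'' reformulation of the shelling condition (for all $i<j$ there is $k<j$ with $\sigma_i\cap\sigma_j\subseteq\sigma_k\cap\sigma_j=\sigma_j\setminus\{v\}$) to verify cleanly, since $\tau\subseteq\sigma_i\cap\sigma_j$ forces the witness $\sigma_k$ to contain $\tau$ as well.
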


We come to one of our main results.

\begin{theorem}\label{ThmShellable}
Let $I=I_{\Delta}$ be the Stanley--Reisner ideal of a shellable simplicial complex, with $\dim(S/I_{\Delta})=d$. Then $r_I\leq \reg(S/I)$.  
\end{theorem}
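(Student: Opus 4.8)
The plan is to reduce the inequality $r_I \le \reg(S/I)$ to an explicit combinatorial/degree estimate using the characterization of $r_I$ from Proposition \ref{PropRegEqui}. Since $I = I_\Delta$ is a square-free monomial ideal of a shellable (hence pure, Cohen--Macaulay) complex $\Delta$ of dimension $d$, the ring $S/I$ is Cohen--Macaulay of dimension $d+1$ by Theorem \ref{shellable-CM}, and $I$ is unmixed: all associated primes are the facet primes $\p_F = (x_i : i \notin F)$ for facets $F$ of $\Delta$, each with $\e(S/\p_F) = 1$. Hence \emph{every} associated prime realizes the minimum multiplicity $\min\{\e(S/Q)\}=1$, so Proposition \ref{PropRegEqui} applies with all $\p_i$'s in the ``minimum'' block and no $\q_j$'s: writing $J_F = \bigcap_{F' \neq F} \p_{F'}$ (intersection over all other facets), we get
$$
r_I = \min\{\, t \mid \exists\ \text{facet } F \text{ such that } [J_F/I]_t \neq 0 \,\}.
$$
So I must show: for some facet $F$, the ideal $J_F$ contains a monomial not in $I$ of degree $\le \reg(S/I)$; equivalently, $[J_F/I]_t \neq 0$ for $t = \reg(S/I)$, or for any single value $t \le \reg(S/I)$.

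First I would use the shelling $\sigma_1, \dots, \sigma_s$ and take $F = \sigma_s$, the last facet in the shelling order. The standard shellability theory (as in \cite{Vila}, the source of Theorem \ref{shellable-CM}) gives a \emph{restriction face} $\mathcal{R}(\sigma_s) \subseteq \sigma_s$, the minimal new face introduced at the last step, and one has the well-known formula $\reg(S/I_\Delta) = \max_i |\mathcal{R}(\sigma_i)|$ (the regularity of the Stanley--Reisner ring of a shellable complex equals the maximum size of a restriction face; this follows from the shelling decomposition of $\KK[\Delta]$ and its behavior under the canonical module / $h$-vector). The key point is then to identify, inside $J_{\sigma_s}/I$, an explicit nonzero monomial supported on the complement of $\sigma_s$ of controlled degree. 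Concretely: a monomial $x^a$ lies in $J_{\sigma_s}$ iff for every facet $F' \neq \sigma_s$ the support of $x^a$ meets the complement $[n]\setminus F'$; it lies outside $I = I_\Delta$ iff its support is a face of $\Delta$, i.e. is contained in some facet — and since we want it in $J_{\sigma_s}$ but conceivably in $\p_{\sigma_s}$ is not required, actually the natural candidate is the monomial $x^{\mathcal{R}(\sigma_s)}$ or rather a monomial whose support is exactly $\sigma_s \setminus (\text{something})$. I would verify that the square-free monomial $m$ whose support is $\sigma_s$ itself (or the complement within $\sigma_s$ of a carefully chosen set) lies in $\bigcap_{F' \neq \sigma_s}\p_{F'}$: this uses that $\sigma_s$, being a facet distinct from each $F'$, is not contained in $F'$, so $\sigma_s$ meets $[n]\setminus F'$, placing $x^{\sigma_s} \in \p_{F'}$. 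Thus $x^{\sigma_s} \in J_{\sigma_s}$, and $x^{\sigma_s} \notin I$ since $\sigma_s$ is a face. This shows $[J_{\sigma_s}/I]_{d+1} \neq 0$ (as $|\sigma_s| = d+1$), giving the crude bound $r_I \le d+1 = \dim(S/I)$, which is Theorem \ref{ThmShellable}'s companion statement but not yet the sharper $r_I \le \reg(S/I)$.

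To get $r_I \le \reg(S/I)$ I need a lower-degree element of some $J_F/I$, and this is where I would work harder: rather than $x^{\sigma_s}$, I would use that in a shelling the last facet $\sigma_s$ has restriction face $\mathcal{R} = \mathcal{R}(\sigma_s)$ of size $\le \reg(S/I)$, and that $\mathcal{R}$ is a \emph{new} face, meaning no facet other than $\sigma_s$ contains $\mathcal{R}$ (that is precisely what ``$\mathcal{R}(\sigma_s)$ is the unique minimal face of $\bar\sigma_s$ not in $\bigcup_{j<s}\bar\sigma_j$'' translates to, combined with $\sigma_s$ being last). Hence for each $F' \neq \sigma_s$, $\mathcal{R} \not\subseteq F'$, so $\mathcal{R}$ meets $[n]\setminus F'$, giving $x^{\mathcal{R}} \in \p_{F'}$ for all $F' \neq \sigma_s$, i.e. $x^{\mathcal{R}} \in J_{\sigma_s}$; and $x^{\mathcal{R}} \notin I$ since $\mathcal{R} \subseteq \sigma_s$ is a face. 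Therefore $[J_{\sigma_s}/I]_{|\mathcal{R}|} \neq 0$ with $|\mathcal{R}| \le \reg(S/I)$, and Proposition \ref{PropRegEqui} yields $r_I \le |\mathcal{R}| \le \reg(S/I)$. The main obstacle — and the place demanding the most care — is establishing cleanly that the restriction face of the last facet of a shelling is contained in no other facet (so that $x^{\mathcal{R}}$ genuinely lands in every $\p_{F'}$, $F' \neq \sigma_s$) and that $\max_i |\mathcal{R}(\sigma_i)| \le \reg(S/I)$ with equality; both are standard in Stanley--Reisner theory but must be cited or re-derived precisely, since the whole bound hinges on the degree $|\mathcal{R}|$ being exactly the regularity rather than merely bounded by $d+1$.
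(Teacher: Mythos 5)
Your proposal is correct, but it takes a genuinely different route from the paper. The paper never touches restriction faces or the $h$-vector: it filters $I=\p_1\cap\cdots\cap\p_\ell$ along the shelling order, setting $J_i=\p_1\cap\cdots\cap\p_i$ and $R_i=S/J_i$, uses that every initial segment of a shelling is again shellable (hence each $R_i$ is Cohen--Macaulay of the same dimension), and runs the local cohomology long exact sequence of $0\to J_{i-1}/J_i\to R_i\to R_{i-1}\to 0$ to get $\reg(J_{\ell-1}/I)\le\reg(R_\ell)=\reg(S/I)$; since the initial degree of $J_{\ell-1}/I$ is bounded by its regularity, Proposition \ref{PropRegEqui} finishes. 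You instead exhibit an explicit witness: the square-free monomial $x^{\mathcal{R}(\sigma_s)}$ supported on the restriction face of the last facet, which lies in $\bigcap_{F'\ne\sigma_s}\p_{F'}$ precisely because $\mathcal{R}(\sigma_s)$ is a ``new'' face contained in no earlier (hence no other) facet, and lies outside $I$ because it is a face; combined with $|\mathcal{R}(\sigma_s)|\le\max_i|\mathcal{R}(\sigma_i)|=\deg h_\Delta(t)=\reg(S/I)$ (valid since $S/I$ is Cohen--Macaulay), this gives the bound. The facts you flag as needing care are indeed standard and correct: the partition property of shellings guarantees $\mathcal{R}(\sigma_s)\not\subseteq\sigma_j$ for $j<s$, and the $h$-vector count $h_j=\#\{i:|\mathcal{R}(\sigma_i)|=j\}$ together with $\reg=a_d+d=\deg h$ for Cohen--Macaulay rings gives the degree bound (you only need the inequality $|\mathcal{R}(\sigma_s)|\le\reg$, not the equality). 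Your argument is more elementary and more informative --- it shows $r_I\le|\mathcal{R}(\sigma_s)|$ for the last facet of any chosen shelling, so one can optimize over shellings --- whereas the paper's homological filtration argument is the one that transports to the $F$-pure setting of the later sections, where no combinatorial shelling data is available. One cosmetic slip: the theorem's convention is $\dim(S/I_\Delta)=d$, so $\dim\Delta=d-1$ and $|\sigma_s|=d$, not $d+1$; this does not affect your argument.
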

\begin{proof}
Since $\Delta$ is shellable,  $S/I_{\Delta}$ is a Cohen--Macaulay ring by Theorem \ref{shellable-CM}. Let $\p_1,\ldots,\p_\ell$ denote the associate primes of $I$. For $1\leq i \leq \ell$, we set $R_i=S/\p_1\cap\p_2\cap\cdots \cap \p_i$ and $J_i= \p_1\cap\cdots \cap \p_i$. We have that $R_i$ is Cohen-Macaulay of dimension $d$, because $J_i$ is a shelling of $I_{\Delta}$. 

We have  the following short exact sequence;
$$
0 \longrightarrow J_{i-1}/J_i \longrightarrow R_i \longrightarrow R_{i-1} \longrightarrow 0
$$ 
for $2\leq i\leq \ell$.

Note that $\dim(R_{i-1})=d$, then $H^{i}_{\m}=0$ for all $i> d$, thus the short exact sequence induces a long exact sequence as follows: 
$$
0\to  H^{0}_{\m}(J_{i-1}/J_i)\to H^{0}_{\m}(R_i) \to  H^{0}_{\m}(R_{i-1})\to \cdots
\to  H^{d}_{\m}( J_{i-1}/J_i)\to H_{\m}^{d}(R_i)\to H^{d}_{\m}(R_{i-1})\to 0.
$$
 
    Since $R_{i-1}$ and $R_i$ are Cohen--Macaulay rings, we have
  \begin{center}
    $\reg(R_{i-1})=a_d(R_{i-1})+d$ and $\reg(R_i)=a_d(R_i)+d.$
    \end{center}
    From the exact sequence we get $\reg(J_{i-1}/J_i)\leq \reg(R_i)$. By Proposition \ref{PropRegEqui}, $r_I\leq \reg(R_i)$. Then $r_I\leq \reg(R_\ell)$, and so,  $r_I\leq \reg(S/I_{\Delta})$.
\end{proof}

\section{Results related to $F$-purity}

\begin{definition}
Let $R$ be a Noetherian ring of prime characteristic $p$, and $F:R\to R$ be the Frobenius map.
We say that $R$ is $F$-pure if for every $R$-module, $M$, we have that
$$
\xymatrix{
M\otimes_R R\ar[rr]^{1_M\otimes_R F}
&&
M\otimes_R R
           }
$$
is injective.
We say that $R$ is $F$-finite if $R$ is finitely generated as $R^p$-module.
\end{definition}

\begin{definition}
Suppose that $\KK$ has prime characteristic, $\KK$ is $F$-finite,  and
that $I$ is a radical ideal. Then, we set
\begin{itemize}
\item $\m_e=\{f\in S/I \; | \; \phi(F^e_* f)\; \forall \phi: F^e_* S/I\to S/I \}$ \cite{AE}.
\item $ b_e=\max\{t\; | \; \m^t\not\subseteq \m_e\}$.
\item $\fpt(S/I)=\lim\limits_{e\to\infty} \frac{b_e}{p^e}$ \cite{TW2004}.
\end{itemize}
\end{definition}

\begin{theorem}[{\cite[Theorem B]{DSNB}}]\label{ThmDSNB}
Suppose that $\KK$ has prime characteristic.
If $S/I$ is a $F$-pure ring,
then $a_i(S/I)\leq  -\fpt(S/I)$.
Furthermore, if $S/I$ is a Gorenstein ring, then $\reg(S/I)=d-\fpt(S/I).$
\end{theorem}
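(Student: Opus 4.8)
The plan is to combine two facts: that $F$-purity of $R:=S/I$ (together with $F$-finiteness of $\KK$) yields, for every $e\geq 1$, \emph{graded} $R$-linear splittings of the iterated Frobenius $F^e\colon R\to F^e_*R$, and that the degree shifts available for these splittings are governed by the integers $b_e$ defining $\fpt(R)$. I would start with a normalization. The ideals $\m_e$ are homogeneous (check this by passing to homogeneous components of the maps $\phi$), and since $\m$ is generated in degree one, $(\m^t)_j=R_j$ for all $j\geq t$. Combining this with $\m^{b_e+1}\subseteq\m_e$, the condition $\m^{b_e}\not\subseteq\m_e$ forces a homogeneous element $g_e\in R_{b_e}\subseteq\m^{b_e}$ with $g_e\notin\m_e$; equivalently, there is a homogeneous map $\theta_e\colon F^e_*R\to R$ of degree $-b_e/p^e$ with $\theta_e(F^e_*g_e)=1$. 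Composing $\theta_e$ with multiplication by $g_e$ on $F^e_*R$ produces a homogeneous splitting $\pi_e\colon F^e_*R\to R$ of degree $0$ with $\pi_e\circ F^e=\mathrm{id}_R$.

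Next I would transport this to local cohomology. Since $F^e_*$ is exact and commutes with the \v{C}ech complex, there is a graded identification $H^i_\m(F^e_*R)=F^e_*H^i_\m(R)$ under which $H^i_\m(F^e)$ becomes the natural Frobenius action $H^i_\m(R)\to F^e_*H^i_\m(R)$, carrying $[H^i_\m(R)]_\alpha$ into $[H^i_\m(R)]_{p^e\alpha}$. Applying $H^i_\m(-)$ to $\pi_e\circ F^e=\mathrm{id}$ shows this action is split injective; and since $\pi_e$ factors through multiplication by $g_e$, for every nonzero $\eta\in[H^i_\m(R)]_\alpha$ the class $g_e\cdot F^e(\eta)\in[H^i_\m(R)]_{p^e\alpha+b_e}$ is again nonzero. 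As $H^i_\m(R)$ is Artinian, hence bounded above in degree by $a_i(R)$, this forces $p^e\alpha+b_e\leq a_i(R)$ for all $e$; dividing by $p^e$, letting $e\to\infty$, and taking $\alpha=a_i(R)$ gives $a_i(R)\leq -\fpt(R)$ (in particular $a_i(R)\leq 0$). If $H^i_\m(R)=0$ the inequality is vacuous.

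For the Gorenstein statement, note that Gorenstein implies Cohen--Macaulay, so $H^i_\m(R)=0$ for $i\neq d$ and $\reg(R)=a_d(R)+d$; hence it suffices to prove the reverse inequality $\fpt(R)\geq -a_d(R)$. Here I would use that, for Gorenstein $R$, $\Hom_R(F^e_*R,R)\cong\omega_{F^e_*R}(-a_d(R))$ is a free $F^e_*R$-module of rank one, and a graded computation with canonical modules shows its generator $\Phi_e$ has degree $a_d(R)(p^e-1)/p^e$ with respect to the $\tfrac{1}{p^e}\ZZ$-grading on $F^e_*R$. Since $R$ is $F$-split, some map $\Phi_e\circ(\text{mult. by }y)$ with $y\in R$ sends $F^e_*1$ to $1$, and comparing degrees forces $y$ to be homogeneous of degree $-a_d(R)(p^e-1)$, a non-negative integer by the previous paragraph. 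Then $y\in R_{-a_d(R)(p^e-1)}\subseteq\m^{-a_d(R)(p^e-1)}$ and $y\notin\m_e$ (because $\Phi_e(F^e_*y)=1\notin\m$), so $b_e\geq -a_d(R)(p^e-1)$. Dividing by $p^e$ and passing to the limit gives $\fpt(R)\geq -a_d(R)$, hence $a_d(R)=-\fpt(R)$ and $\reg(R)=d-\fpt(R)$.

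The first inequality is essentially formal once the graded splittings are available; its one genuinely careful point is that the witness $g_e$ lands in degree \emph{exactly} $b_e$, which relies on $\m$ being standard graded and on $\m^{b_e+1}\subseteq\m_e$. I expect the real obstacle to be the reverse inequality in the Gorenstein case: this is where the Gorenstein hypothesis is used essentially (to make $\Hom_R(F^e_*R,R)$ cyclic over $F^e_*R$), and where the degree bookkeeping is most delicate---one must compute the degree of the generating Cartier operator $\Phi_e$ correctly and then read off that the $y$ with $\Phi_e(F^e_*y)=1$ has degree $-a_d(R)(p^e-1)$, which is precisely what pushes $b_e/p^e$ up to $-a_d(R)$ in the limit. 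This is the strategy of \cite{DSNB}.
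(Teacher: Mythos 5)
The paper does not prove this statement: it is quoted verbatim as \cite[Theorem B]{DSNB}, so there is no in-paper argument to compare against. Your reconstruction is correct and follows the strategy of the cited reference: the homogeneity of $\m_e$ plus $\m^{b_e+1}\subseteq\m_e$ pins the witness $g_e$ to degree exactly $b_e$, the induced graded splitting forces $[H^i_\m(R)]_{p^e\alpha+b_e}\neq 0$ whenever $[H^i_\m(R)]_\alpha\neq 0$, and taking $\alpha=a_i(R)$ and $e\to\infty$ gives $a_i(R)\leq-\fpt(R)$; the reverse inequality in the Gorenstein case via the cyclic generator $\Phi_e$ of $\Hom_R(F^e_*R,R)$ in degree $a_d(R)(p^e-1)/p^e$, yielding $b_e\geq -a_d(R)(p^e-1)$, is also the right (and the delicate) half of the argument. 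I see no gap.
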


\begin{remark}\label{RmKCompstibleFPT}
Suppose that $\KK$ has prime characteristic, $\KK$ is $F$-finite,  and  that  $S/I$ is a $F$-pure ring.
Let $\p_1,\ldots, \p_\ell$ be the minimal primes of $I$, and $J_i=\bigcap_{i\neq j}\p_j$.
Then, $S/J_i$ is $F$-pure \cite[Corollary 4.8]{CentersFpurity}.
Furthermore, $\fpt(S/I)\leq \fpt(S/J_i)$ \cite[Theorem 4.7]{DSNB}, because $J_i \cdot S/I$ is a compatible ideal
for $S/I$.
\end{remark}

\begin{theorem}\label{ThmFpureDim}
Suppose that $\KK$ has prime characteristic.
If $S/I$ is a $F$-pure ring,
then  $r_I\leq \dim(S/I)$.
\end{theorem}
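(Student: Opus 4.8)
\textbf{Proof proposal for Theorem \ref{ThmFpureDim}.}
The plan is to mimic the argument used for shellable Stanley--Reisner ideals in Theorem \ref{ThmShellable}, replacing the Cohen--Macaulay input with the $F$-purity bound of Theorem \ref{ThmDSNB}. First I would reduce to the case where $I$ is unmixed: since $S/I$ is $F$-pure it is reduced, hence $I$ is a radical ideal, and an $F$-pure ring is known to be equidimensional once it is a quotient of a polynomial ring (all minimal primes have the same dimension), so $\cA(I) = \Ass_S(S/I)$ is the full set of minimal primes. If $I$ is prime there is nothing to prove, so assume $I$ has minimal primes $\p_1,\ldots,\p_\ell$ with $\ell \geq 2$ and $d = \dim(S/I) \geq 1$.

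Next I would set $J_i = \bigcap_{j\neq i}\p_j$ as in Remark \ref{RmKCompstibleFPT} and invoke Proposition \ref{PropRegEqui}: after reindexing so that $\e(S/\p_i)$ attains the minimum exactly for $i$ in some subset, $r_I$ equals the least degree $t$ for which some $[J_i/I]_t \neq 0$. So it suffices to bound, for each relevant $i$, the initial degree $\indeg(J_i/I)$ by $d$. The point is that $J_i/I$ is a nonzero submodule of $S/I$ (since $J_i \neq I$ as $\ell \geq 2$), and it is annihilated by $\p_i$, so it is a faithful module over $S/\p_i$ — in particular it is a $d$-dimensional graded $S/\p_i$-module. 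I would then argue that for a nonzero finitely generated graded module $N$ of dimension $d$ over a standard graded $F$-pure ring (or just over $S/I$), the initial degree is controlled by the regularity, and more specifically by $a_d$. Concretely: $[N]_t \neq 0$ for $t = \indeg(N)$, and one relates this to $a_0$ or to the socle; the cleanest route is that $\indeg(N) \leq \reg(N)$, and then to bound $\reg(J_i/I)$.

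The key estimate is $\reg(J_i/I) \leq d$. I would get this from the short exact sequence $0 \to J_i/I \to S/I \to S/J_i \to 0$, which gives $\reg(J_i/I) \leq \max\{\reg(S/I), \reg(S/J_i)+1\}$. By Theorem \ref{ThmDSNB}, $a_k(S/I) \leq -\fpt(S/I) \leq 0$ for all $k$ (the fpt is positive), so $\reg(S/I) = \max_k\{a_k(S/I)+k\} \leq d$; and since $S/J_i$ is $F$-pure by Remark \ref{RmKCompstibleFPT} and has dimension $d-1$ (it is the coordinate ring of the union of the other components, all of dimension $d$... actually $\dim S/J_i = d$ as well since each $\p_j$ has dimension $d$), I need to be careful: $S/J_i$ also has dimension $d$, so the same argument gives $\reg(S/J_i) \leq d$, and the exact sequence only yields $\reg(J_i/I) \leq d+1$, which is off by one. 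The main obstacle is therefore closing this gap: I expect one must instead bound $\indeg(J_i/I)$ directly rather than through regularity, using that $J_i/I \hookrightarrow S/I$ and $[S/I]_t \neq 0$ forces control, OR use that $H^0_\m(J_i/I) = 0$ (it has no finite-length submodule, being faithful over the domain $S/\p_i$, provided $d \geq 1$) to improve the long exact sequence bound, obtaining $a_k(J_i/I) \leq \max\{a_k(S/I), a_{k+1}(S/J_i)\}$ for $k \geq 1$ and $a_0(J_i/I) = -\infty$, hence $\reg(J_i/I) \leq \max_{k\geq 1}\{a_k(S/I)+k, a_{k+1}(S/J_i)+k\} \leq \max\{\reg(S/I), \reg(S/J_i)\} \leq d$. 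Then $r_I \leq \indeg(J_i/I) \leq \reg(J_i/I) \leq d$ for every relevant $i$, and Proposition \ref{PropRegEqui} finishes the proof. Verifying the vanishing $H^0_\m(J_i/I)=0$ and the degree-shift bookkeeping in the long exact sequence is the delicate part I would need to nail down carefully.
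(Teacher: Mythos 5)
Your overall strategy is the paper's: the same modules $J_i=\bigcap_{j\neq i}\p_j$, the same short exact sequence, and the same plan of bounding $\indeg(J_i/I)$ by $\reg(J_i/I)$ via $a$-invariants and then invoking Propositions \ref{PropRegNotEqui}/\ref{PropRegEqui}. But the step you yourself flag as the obstacle is not closed correctly. From $0\to J_i/I\to S/I\to S/J_i\to 0$ the long exact sequence reads $H^{k-1}_\m(S/J_i)\to H^k_\m(J_i/I)\to H^k_\m(S/I)$, so the correct bound is $a_k(J_i/I)\leq\max\{a_{k-1}(S/J_i),\,a_k(S/I)\}$, not $\max\{a_k(S/I),a_{k+1}(S/J_i)\}$ as you wrote. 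With the correct shift, your route through $\reg(S/J_i)\leq d$ still only yields $\reg(J_i/I)\leq\max\{\reg(S/I),\reg(S/J_i)+1\}\leq d+1$; the observation $H^0_\m(J_i/I)=0$ (which is true) does not remove the $+1$. The resolution in the paper is to never aggregate into a regularity bound for $S/J_i$: $F$-purity of $S/I$ and of $S/J_i$ (Remark \ref{RmKCompstibleFPT}) gives the \emph{termwise} bounds $a_k(S/I)\leq 0$ and $a_k(S/J_i)\leq 0$ for \emph{every} $k$, hence $a_k(J_i/I)\leq 0$ for every $k$ by the long exact sequence, and therefore $\reg(J_i/I)=\max_k\{a_k(J_i/I)+k\}\leq\max_k\{k\}\leq d$. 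The index shift is then harmless. This is the missing idea in your writeup.

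A secondary error: your reduction to the unmixed case rests on the claim that an $F$-pure quotient of a polynomial ring is equidimensional, which is false; for instance $\KK[x,y,z]/(xy,xz)=\KK[x,y,z]/\bigl((x)\cap(y,z)\bigr)$ is a Stanley--Reisner ring, hence $F$-pure, but its components have dimensions $2$ and $1$. This is not fatal to the architecture — in the mixed case Proposition \ref{PropRegNotEqui} identifies $r_I$ with the initial degree of $J/I$ for $J$ the intersection of the top-dimensional minimal primes, and since each $\bigcap_{j\neq i}\p_j$ is contained in that $J$ when $\p_i$ is a low-dimensional component, the same bound on $\indeg(J_i/I)$ suffices — but as written the reduction is unjustified and should be replaced by a case split on mixed versus unmixed.
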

\begin{proof}
Let $\p_1,\ldots,\p_\ell $ be the minimal primes of $I$.
For $i=1,\ldots,\ell$, we set $J_i=\bigcap_{i\neq j}\p_j$.
We have a short exact sequence
$$
0\to J_i/I\to S/J_i\to S/I\to 0.
$$
This induces a long exact sequence
$$
0\to H^0_\m (J_i/I)\to H^0_\m ( S/J_i)\to H^0_\m (S/I)\to  H^1_\m (J_i/I)\to \ldots .
$$
Since both $S/J_i$ and $S/I$ are $F$-pure,
we have that $a_j(S/J_i)\leq 0$ and $a_j(S/I)\leq 0$ for every $j$. Then,
$a_j(J_i/I)\leq 0$ for every $j$ \cite[Proposition 2.4]{HRFpurity}.
Then, 
\begin{align*}
\min\{t\; | \; [J_i/I]_t  \neq 0\}&  \leq  \max\{\ell\; | \;\beta_{0,\ell}\neq 0\} (J_i/I)\\
& \leq \reg(J_i/I) \\
&= \max\{ a_j(J_i/I)+j \}\\
& \leq \dim(S/I).
\end{align*}
\end{proof}

\begin{theorem}\label{ThmGorFpure}
Suppose that $\KK$ has prime characteristic.
If $S/I$ is a $F$-pure ring,
then  $r_I\leq \reg(S/I)$.
\end{theorem}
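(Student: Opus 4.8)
I would model the argument on the proof of Theorem \ref{ThmFpureDim}, but sharpen the bound on the regularity of the relevant module by using the two extra inputs available here: a Gorenstein ring is Cohen--Macaulay, and for an $F$-pure Gorenstein ring one has $\reg(S/I)=d-\fpt(S/I)$ by Theorem \ref{ThmDSNB}. To set up: an $F$-pure ring is reduced, so $I$ is radical; $S/I$ Gorenstein is Cohen--Macaulay, hence equidimensional, so $I$ is unmixed and $\Ass_S(S/I)=\Min_S(S/I)=\{\p_1,\dots,\p_\ell\}$. If $I$ is prime there is nothing to prove (then $r_I=0$ by Proposition \ref{PropRegEqui}), so I may assume $\ell\geq 2$, and then $d\geq 1$. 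Relabel the $\p_k$ so that $\e(S/\p_1)=\min\{\e(S/Q)\mid Q\in\Ass_S(S/I)\}$ and put $J=\bigcap_{k=2}^{\ell}\p_k$, an ideal strictly containing $I$. Then $J$ is one of the ideals ``$J_i$'' of Proposition \ref{PropRegEqui}, so
$$
r_I \leq \min\{t\mid [J/I]_t\neq 0\} = \indeg(J/I) \leq \reg(J/I),
$$
the last inequality being the standard bound $\indeg(M)\leq\reg(M)$ for a finitely generated graded $S$-module, used already in the proof of Theorem \ref{ThmFpureDim}. So it suffices to show $\reg(J/I)\leq\reg(S/I)$.

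For this I would use the long exact sequence in local cohomology with support in $\m$ attached to the short exact sequence $0\to J/I\to S/I\to S/J\to 0$. Write $c=\fpt(S/I)$. Since $J/I\subseteq S/I$ we have $\dim(J/I)\leq d$, so $H^{k}_\m(J/I)=0$ for $k>d$. Since $S/I$ is Cohen--Macaulay of dimension $d$, $H^{k}_\m(S/I)=0$ for $k\neq d$ and $a_d(S/I)+d=\reg(S/I)=d-c$, so $a_d(S/I)=-c$. By Remark \ref{RmKCompstibleFPT}, $S/J$ is $F$-pure and $\fpt(S/J)\geq c$, so Theorem \ref{ThmDSNB} gives $a_k(S/J)\leq -\fpt(S/J)\leq -c$ for every $k$. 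Now read off the long exact sequence: for $k<d$ the vanishing of $H^{k}_\m(S/I)$ makes $H^{k}_\m(J/I)$ a homomorphic image of $H^{k-1}_\m(S/J)$, so $a_k(J/I)\leq a_{k-1}(S/J)\leq -c$; for $k=d$, $H^{d}_\m(J/I)$ is an extension of a submodule of $H^{d}_\m(S/I)$ by a homomorphic image of $H^{d-1}_\m(S/J)$, so $a_d(J/I)\leq\max\{a_{d-1}(S/J),a_d(S/I)\}\leq -c$. Hence $a_k(J/I)\leq -c$ for all $k$, and therefore
$$
\reg(J/I) = \max_{1\leq k\leq d}\{a_k(J/I)+k\} \leq \max_{1\leq k\leq d}(-c+k) = d-c = \reg(S/I).
$$
Together with the reduction above, this yields $r_I\leq\reg(S/I)$.

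I expect the delicate point to be exactly the control of the ``shifted'' terms $a_{k-1}(S/J)+k$ that the long exact sequence throws up: a priori such a term is as large as $\reg(S/J)+1$, one unit above what is permitted, which is why the estimate behind Theorem \ref{ThmFpureDim} gives only $r_I\leq\dim(S/I)$. These shifts are absorbed solely because they occur for $k\leq d$, where $a_{k-1}(S/J)+k\leq -c+k\leq -c+d=\reg(S/I)$; this is where Cohen--Macaulayness of $S/I$ (which annihilates $H^{k}_\m(S/I)$ for $k<d$ and forces $a_d(S/I)=-c$) and the equality $\reg(S/I)=d-\fpt(S/I)$ of Theorem \ref{ThmDSNB} are essential. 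In particular, with $F$-purity alone one recovers only $r_I\leq\dim(S/I)$, and if $S/I$ were Cohen--Macaulay but with $\reg(S/I)$ strictly below $d-\fpt(S/I)$ the argument would give only $r_I\leq d-\fpt(S/I)$; so the Gorenstein hypothesis is used in an essential way.
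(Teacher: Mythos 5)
Your argument is essentially the paper's own proof: the same short exact sequence $0\to J/I\to S/I\to S/J\to 0$ (which you in fact write in the correct direction, unlike the paper's typo), the same local cohomology bound $a_k(J/I)\le -\fpt(S/I)$ obtained from Theorem \ref{ThmDSNB} and Remark \ref{RmKCompstibleFPT}, and the same conclusion via Proposition \ref{PropRegEqui} together with $\indeg(J/I)\le\reg(J/I)$ and the Gorenstein equality $\reg(S/I)=d-\fpt(S/I)$; your explicit tracking of the degree shift $a_{k-1}(S/J)$ versus $a_k(J/I)$ is a more careful rendering of the same estimate. The only step you omit is the reduction to an $F$-finite field (needed so that $\fpt$ is defined), which the paper handles by base change to $\overline{\KK}$, noting that the numerical invariants and $F$-purity are unaffected.
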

\begin{proof}
We first assume that $\KK$ is $F$-finite.
Let $\p_1,\ldots,\p_\ell $ be the minimal primes of $I$.
For $i=1,\ldots,\ell$, we set $J_i=\bigcap_{i\neq j}\p_j$.
We have a short exact sequence
$$
0\to J_i/I\to S/J_i\to S/I\to 0.
$$
This induces a long exact sequence
$$
0\to H^0_\m (J_i/I)\to H^0_\m ( S/J_i)\to H^0_\m (S/I)\to H^1_\m (J_i/I)\to \ldots .
$$
Since both $S/J_i$ and $S/I$ are $F$-pure,
we have that $a_j(S/J_i)\leq -\fpt(S/J_i)$ and $a_j(S/I)\leq -\fpt(S/I)$ for every $j$. Then,
$$a_j(J_i/I)\leq \max\{-\fpt(S/J_i), -\fpt(S/I)\} \leq -\fpt(S/I) $$ for every $j$ by Theorem \ref{ThmDSNB}.
Then, 
\begin{align*}
\min\{t\; | \; [J_i/I]_t  \neq 0\}&  \leq  \max\{\ell \mid \beta_{0,\ell}(J_i/I)\neq 0\} \\
& \leq \reg(J_i/I) \\
&= \max\{ a_j(J_i/I)+j \}\\
&=\reg(S/I).
\end{align*}
The result for non  $F$-finite fields follows from taking the product $\otimes_\KK \overline{\KK}$, 
because the numerical invariants do not change after field extensions.  In addition, $F$-purity is stable for field extensions.
\end{proof}

\begin{theorem}\label{ThmSRGor}
Let $\KK$ be any field and  $I$ is a square-free monomial ideal.
Then, $r_I\leq \dim(S/I)$.
If  $S/I$ is a Gorenstein  ring,
then  $r_I\leq \reg(S/I)$.
\end{theorem}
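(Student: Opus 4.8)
\noindent\textit{Proof strategy.}
The plan is to give a purely combinatorial description of $r_I$ via Propositions~\ref{PropRegNotEqui}--\ref{PropRegEqui} and then read off both inequalities from it. Write $I=I_\Delta$ and let $F_1,\dots,F_\ell$ ($\ell\ge 2$, since $I\ne 0$) be the facets of $\Delta$, so that the associated (= minimal) primes of $I$ are the $\p_i=\p_{F_i}=(x_j\mid j\notin F_i)$. The first point I would record is that each $S/\p_i$ is a polynomial ring, so $\e(S/\p_i)=1$; hence in Proposition~\ref{PropRegEqui} every associated prime attains the minimal multiplicity, and when $I$ is unmixed ($\Delta$ pure) that proposition reduces to
$$r_I=\min\{\,t\mid [J_i/I]_t\neq 0\text{ for some }i\,\},\qquad J_i=\bigcap_{j\neq i}\p_j=I_{\Delta_i},\quad \Delta_i:=\langle F_j\mid j\neq i\rangle .$$
By the Stanley--Reisner dictionary, $x^\sigma\in J_i\setminus I$ precisely when $\sigma$ is a face of $\Delta$ whose only containing facet is $F_i$; thus $\indeg(J_i/I)$ is the least cardinality of such a face, and $r_I$ is the least cardinality of a face of $\Delta$ lying in a unique facet. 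When $I$ is mixed I would use Proposition~\ref{PropRegNotEqui} instead, where $J_1$ is the Stanley--Reisner ideal of the subcomplex generated by the top-dimensional facets: then any facet $F_0$ with $|F_0|<d:=\dim(S/I)$ gives $x^{F_0}\in J_1\setminus I$, so $r_I=\indeg(J_1/I)\le|F_0|<d$.

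With this description in hand, the bound $r_I\le\dim(S/I)$ is immediate: in the unmixed case each facet $F_i$ is a face lying in the unique facet $F_i$, so $r_I\le|F_i|=d$; in the mixed case the computation above already gives $r_I<d$. (Alternatively, the unmixed case can be run exactly as in the proof of Theorem~\ref{ThmFpureDim}, using the classical vanishing $a_i(\KK[\Delta])\le 0$ in place of $F$-purity.)

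For the Gorenstein statement: since $S/I$ is Gorenstein it is Cohen--Macaulay, so $\Delta$ is pure with every facet of cardinality $d$. Let $C$ be the set of cone points of $\Delta$ --- the vertices belonging to every facet, equivalently the variables dividing no minimal generator of $I$ --- and put $r=|C|$. The key observation is that for any facet $F$ the face $F\setminus C$ lies in a \emph{unique} facet: if $F\setminus C\subseteq F'$ then $C\subseteq F'$ as well, whence $F=(F\setminus C)\cup C\subseteq F'$ and so $F=F'$. Taking $\sigma=F_i\setminus C$ (a face of cardinality $d-r$, since $C\subseteq F_i$) in the description of $r_I$ then yields $r_I\le d-r$. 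It remains to identify $\reg(S/I)$: because $I$ involves none of the variables indexed by $C$, one has $\KK[\Delta]=\KK[\core(\Delta)][x_v\mid v\in C]$, a polynomial extension, so $\reg(S/I)=\reg(\KK[\core(\Delta)])$; and by Stanley's classification $\core(\Delta)$ is a Gorenstein${}^{*}$ complex over $\KK$, for which $a(\KK[\core(\Delta)])=0$, hence (being Cohen--Macaulay) $\reg(\KK[\core(\Delta)])=a(\KK[\core(\Delta)])+\dim\KK[\core(\Delta)]=d-r$; see \cite{Vila}. Combining, $r_I\le d-r=\reg(S/I)$.

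The main obstacle, such as it is, is the final identity $\reg(S/I)=d-r$: it rests on the structure theory of Gorenstein simplicial complexes and on the vanishing of the $a$-invariant of a Gorenstein${}^{*}$ complex, which must be quoted in the right (unshifted) normalization. Everything else is a direct unwinding of Propositions~\ref{PropRegNotEqui}--\ref{PropRegEqui}, the only delicate point being the bookkeeping that turns ``$[J_i/I]_t\neq 0$'' into ``$\sigma$ lies in a unique facet of $\Delta$''. (In fact the same ideas give the exact equality $r_I=d-r$: in a Gorenstein${}^{*}$ complex every ridge lies in exactly two facets, so inductively every non-facet face lies in at least two facets, and hence the only faces of $\core(\Delta)$ contained in a unique facet are its facets, of cardinality $d-r$.)
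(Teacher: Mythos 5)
Your proof is correct, but it takes a genuinely different route from the paper. The paper deduces the theorem from its $F$-purity results: in characteristic $p$ it invokes Theorems \ref{ThmFpureDim} and \ref{ThmGorFpure} (Stanley--Reisner rings are $F$-pure), and in characteristic zero it lifts $I$ to $\ZZ[x_1,\dots,x_n]$ and reduces mod $p\gg 0$, using that the graded pieces $[J_i/I]_t$ of a monomial ideal, the regularity, and the Gorenstein property are insensitive to this base change. You instead stay entirely combinatorial and characteristic-free: from Propositions \ref{PropRegNotEqui} and \ref{PropRegEqui} (all $\e(S/\p_{F_i})=1$) you extract the description of $r_I$ as the least cardinality of a face of $\Delta$ contained in a unique facet, which gives $r_I\le d$ at once, and in the Gorenstein case you compare against $\reg(S/I)=d-|\core$-$\mathrm{complement}|$ via Stanley's Gorenstein$^*$ structure theory and the vanishing $a(\KK[\core(\Delta)])=0$. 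What your approach buys: no reduction mod $p$, no $F$-pure or $\fpt$ machinery, and (per your closing remark) the sharper statement $r_I=\reg(S/I)$ in the Gorenstein case; what the paper's approach buys is that the same two theorems cover the strictly larger class of $F$-pure quotients, with the monomial case as a corollary. Two small points to tidy: your assertion that $I\neq 0$ forces $\ell\ge 2$ is not right (a face ideal $\p_F$ is a prime square-free monomial ideal with $\ell=1$), though in that degenerate case $\delta_I$ is constant and the bound is immediate; and you should state the normalization $\dim(S/I_\Delta)=\dim\Delta+1=|F_i|$ explicitly, since the whole first inequality rides on it.
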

\begin{proof}
If $\KK$ has prime characteristic, the result follows from 
Theorems \ref{ThmFpureDim} and \ref{ThmGorFpure}.

We now assume that $\KK$ has characteristic zero.
Since field extensions do not affect whether a ring is Gorenstein and their dimension, without loss of generality we can assume that $\KK=\mathbb{Q}$.
	Let $A=\ZZ[x_1,\ldots,x_n]$ and $I_A$ the monomial ideal generated by the monomials in $I$.
We have that  $r_I=r_{I_A \otimes_\ZZ \FF_p}$	by Propositions \ref{PropRegNotEqui} and \ref{PropRegEqui}, since $\dim (S/I)=\dim(A\otimes_\ZZ \QQ / I_A \otimes_\ZZ \QQ )=\dim( \FF_p [x_1,\ldots,x_n]/ I_A \otimes_\ZZ \FF_p)$.	
Then, $$r_I=r_{I_A \otimes_\ZZ \FF_p}\leq     \dim( \FF_p [x_1,\ldots,x_n]/ I_A \otimes_\ZZ \FF_p)= \reg(S/I)$$
by Theorem \ref{ThmGorFpure},  because Stanley--Reisner rings in prime characteristic are $F$-pure.

We have that  
		\begin{equation*}
			\reg_S(S/I)=\reg_{A\otimes_\ZZ \QQ}(A\otimes_\ZZ \QQ/I_A \otimes_\ZZ \QQ)=\reg_{A\otimes_\ZZ \FF_p}(A/I_A \otimes_\ZZ \FF_p)
		\end{equation*}
			and $A/J\otimes_\ZZ \FF_p$ is Gorenstein for $p\gg 0$ \cite[Theorem 2.3.5]{HH}.
Then, the result follows from Theorem \ref{ThmGorFpure}, because Stanley--Reisner rings in prime characteristic are $F$-pure.
\end{proof}

\section*{Acknowledgments} 
We thank Carlos Espinosa-Vald\'ez for comments on an earlier draft.

\bibliographystyle{alpha}
\bibliography{References}

\end{document}